\documentclass[11pt,a4paper]{article}
\usepackage{amssymb}
\usepackage{eurosym}
\usepackage{amsfonts}
\usepackage{amsmath}
\usepackage{amsthm}
\usepackage{graphicx}
\usepackage{float}
\usepackage{hyperref}
\usepackage[pagewise]{lineno}

\hypersetup{
	colorlinks=true,
	linkcolor=blue,
	anchorcolor=blue,
	citecolor=blue
}
\newtheorem{theorem}{Theorem}[section]

\newtheorem{conjecture}[theorem]{Conjecture}

\newtheorem{lemma}[theorem]{Lemma}
\newtheorem{proposition}[theorem]{Proposition}
\theoremstyle{definition}
\newtheorem{definition}[theorem]{Definition}
\newtheorem{example}[theorem]{Example}

\newtheorem{remark}[theorem]{Remark}

\renewenvironment{proof}[1][Proof]{\noindent\textbf{#1.} }{\ \rule{0.5em}{0.5em}}
\newenvironment{acknowledgement}{\smallskip{\sc Acknowledgement.}\rm}{\smallskip}
\renewcommand{\theequation}{\thesection.\arabic{equation}}
\allowdisplaybreaks
\input{tcilatex}

\def\func#1{\mathop{\mathrm{#1}}\nolimits}

\def\Xint#1{\mathchoice
{\XXint\displaystyle\textstyle{#1}}%
{\XXint\textstyle\scriptstyle{#1}}%
{\XXint\scriptstyle\scriptscriptstyle{#1}}%
{\XXint\scriptscriptstyle\scriptscriptstyle{#1}}%
\!\int}
\def\XXint#1#2#3{{\setbox0=\hbox{$#1{#2#3}{\int}$ }
\vcenter{\hbox{$#2#3$ }}\kern-.6\wd0}}

\def\oint{\Xint-}

\def\enddoc{\end{document}}

\def\FRAME#1#2#3#4#5#6#7#8
{
 \begin{figure}[H]
 \begin{center}
 \includegraphics[height=#3]{#7}
 \caption{#5}
 \label{#6}
 \end{center}
 \end{figure}
}

\begin{document}
	\title{Conservation of mass for solutions of Leibenson's equation on Riemannian Manifolds}
	\author{Philipp S\"urig}
	\date{September 2026}
	\maketitle
	
	\begin{abstract}
		We consider on a Riemannian manifold $M$ the Leibenson equation \begin{equation*}\label{eqabs}\partial _{t}u=\Delta _{p}u^{q},\end{equation*} where $p>1$ and $q>0$. When $q(p-1)\geq 1$, we prove conservation of mass for solutions of Leibenson's equation assuming only the volume bound $V(x_0, r)\leq \exp\left(C r^{\frac{p}{p-1}}\right)$ for some $x_{0}\in M$ and all large enough $r>0$. When $q(p-1)< 1$, we prove this property assuming $V(x_0, r)\leq Cr^{N}$ and $p>N[1-q(p-1)]$, which matches the threshold in $\mathbb{R}^{n}$ with $N=n$. We also show that solutions on the hyperbolic space $\mathbb{H}^{n}$ have a finite extinction time in the case $q(p-1)< 1$, which implies that the conservation of mass property does not hold.
		Using the conservation of mass result in the case $q(p-1)=1$, we also prove a $L^{p-1}$- Liouville property, which partially answers a conjecture stated by I. Holopainen \cite{holopainen2000sharp}.		
	\end{abstract}
	
	\let\thefootnote\relax\footnotetext{\textit{\hskip-0.6truecm 2020 Mathematics Subject Classification.} 58J35, 35K92, 35B40. \newline
		\textit{Key words and phrases.} Leibenson equation, doubly nonlinear
		parabolic equation, Riemannian manifold, conservation of mass. \newline
		The author was funded by the Deutsche Forschungsgemeinschaft (DFG,
		German Research Foundation) - Project-ID 317210226 - SFB 1283.}
	
	\tableofcontents
	
	\section{Introduction}
	
	Let $M$ be a Riemannian manifold. We consider solutions of the non-linear evolution
	equation 
	\begin{equation}
		\partial _{t}u=\Delta _{p}u^{q},  \label{evoeq}
	\end{equation}%
	where $$p>1\quad \textnormal{and}\quad q>0,$$ $u=u(x,t)$ is an unknown non-negative function of $x\in M$, $t\geq0$ and $%
	\Delta _{p}$ is the Riemannian $p$-Laplacian 
	$\Delta _{p}v=\func{div}\left( |\nabla v|^{p-2}\nabla v\right).$
	For the physical meaning of (\ref{evoeq}) see \cite{grigor2024finite, leibenzon1945general, leibenson1945turbulent}. 
	
	The equation (\ref{evoeq}) is also referred to as \textit{Leibenson's equation} or a \textit{doubly non-linear parabolic equation}. 
	
	When $M=\mathbb{R}^{n}$ and if \begin{equation}\label{critical}p>n[1-q(p-1)]\end{equation} it is well-known (cf. \cite{agueh2010large, dibenedetto1993degenerate, vazquez2007porous}) that any solution of (\ref{evoeq}) with non-negative initial function $u(\cdot, 0)=u_0\in L^{1}(M)$ has the property of \textit{mass conservation}, that is, for all $t>0$, \begin{equation}\label{massconsint}\int_M u(x, t)=\int_M u_0(x).\end{equation}
	On the other hand, if \begin{equation}\label{noncrit}p<n[1-q(p-1)],\end{equation} then any solution of (\ref{evoeq}) has a \textit{finite extinction time} in $\mathbb{R}^{n}$, that means there exists $T>0$ such that $$u(\cdot, t)\equiv0\quad \text{for all}~t\geq T.$$	
	Hence, $u$ can not satisfy (\ref{massconsint}) when $u_0\not\equiv0$ and (\ref{noncrit}) holds.
	
	In the present paper we are interested in investigating the property (\ref{massconsint}) for solutions of (\ref{evoeq}) on general Riemannian manifolds.
	
	Let $M$ be a geodesically complete Riemannian manifold. Denote by $\mu$ the \textit{Riemannian measure} on $M$, by $d$ the \textit{geodesic distance} and by $B(x, r)$ the \textit{geodesic ball} of radius $r$ centered at $x$.
	We denote its volume by $$V(x, r)=\mu(B(x, r)).$$
		
	The first main result of the present paper is as follows (cf. Theorem \ref{expmass}).
	
	\begin{theorem}\label{expmassint}
		Assume that, for some $x_0\in M$ and all large enough $r>0$, \begin{equation}\label{expvolint}V(x_0, r)\leq \exp\left(C r^{\frac{p}{p-1}}\right),\end{equation} where $C$ is a positive constant.
		Assume that \begin{equation}\label{degandexcase}q(p-1)\geq 1.\end{equation}
		Let $u$ be a non-negative bounded solution to (\ref{evoeq}) in $M\times [0, \infty)$ with non-negative initial function $u(\cdot, 0)=u_0\in L^{1}(M)\cap L^{\infty}(M)$. Then (\ref{massconsint}) holds for all $t>0$.
	\end{theorem}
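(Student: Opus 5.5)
We will test the equation against a cutoff function and show that the resulting boundary term vanishes as the cutoff exhausts $M$. For $R>0$, fix a Lipschitz cutoff $\varphi_R$ with $\varphi_R=1$ on $B(x_0,R)$, $\varphi_R=0$ outside $B(x_0,2R)$ and $|\nabla\varphi_R|\le 1/R$. Using the weak formulation of (\ref{evoeq}), we get for $t>0$
\begin{equation*}
\left|\int_M u(\cdot,t)\varphi_R-\int_M u_0\varphi_R\right|\le \int_0^t\int_{B(x_0,2R)\setminus B(x_0,R)}|\nabla u^q|^{p-1}|\nabla\varphi_R|\,d\mu\,ds .
\end{equation*}
Letting $R\to\infty$ on the left, by monotone or dominated convergence, reduces (\ref{massconsint}) to showing that the right-hand side tends to $0$. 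We also need $u(\cdot,t)\in L^1$, which will come out of the same estimate.

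A naive bound on the right-hand side costs a factor $V(x_0,2R)$, which is far too large under (\ref{expvolint}). Instead, we will use a weighted energy (Caccioppoli-type) estimate with an exponential weight, in the spirit of Grigor'yan's uniqueness class argument for the heat equation. Concretely, we set
\begin{equation*}
\xi(x,s)=\frac{d(x,x_0)^{p/(p-1)}}{(\tau-s)^{1/(p-1)}}\cdot c
\end{equation*}
for a small constant $c$, on time slabs $[0,\tau]$ of small length $\tau$. This is the natural Barenblatt-type exponent for the $p$-Laplacian part, and it satisfies $\partial_s\xi+c'|\nabla\xi|^{p/(p-1)}\le 0$.

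The key step is to multiply the equation by $u^{q\alpha}e^{\xi}\eta^p$, or simply by $e^{\xi}\eta^p$ using boundedness of $u$, and integrate by parts. Young's inequality absorbs the cross term $|\nabla u^q|^{p-1}|\nabla\xi|$ into the good gradient term plus the $\partial_s\xi$ term. Here the condition $q(p-1)\ge1$ enters: it makes the homogeneity of $|\nabla u^q|^{p-1}$ compatible with the weight exponent, since the leftover powers of $u$ have nonnegative exponent and are controlled by $\|u\|_\infty$. The outcome should be a bound
\begin{equation*}
\int_0^\tau\int_{B(x_0,2R)\setminus B(x_0,R)}|\nabla u^q|^{p-1}\,d\mu\,ds\le C\,R^{-a}\,\exp\!\left(-c\frac{R^{p/(p-1)}}{\tau^{1/(p-1)}}\right)\int_{B(x_0,4R)}\left(u_0+\|u\|_\infty\right),
\end{equation*}
or an analogue using Hölder against $e^{-\xi}$. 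Combined with $V(x_0,4R)\le \exp(C(4R)^{p/(p-1)})$, this tends to $0$ once $\tau$ is small enough that $c\tau^{-1/(p-1)}>C4^{p/(p-1)}$. Iterating over consecutive slabs of length $\tau$, which is independent of $t$, gives the result for all $t$.

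The main obstacle will be this weighted energy estimate. We must handle the degenerate or singular structure of $|\nabla u^q|^{p-2}\nabla u^q$ with $u^q$ rather than $u$ inside the gradient, while only $u\in L^1\cap L^\infty$ is available. A first attempt will dyadically decompose the annuli $B(x_0,2^{k+1}R)\setminus B(x_0,2^kR)$ and sum geometric contributions against the volume bound. If the boundary gradient term resists direct control, the fallback is a Davies-style $L^1$-contraction argument: compare $u$ with the solution with compactly supported data, using finite propagation when $q(p-1)>1$.
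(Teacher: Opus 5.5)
Your outline has the same architecture as the paper's proof: test with a cutoff, show the flux through $B(x_0,2R)\setminus B(x_0,R)$ vanishes, use an exponential weight of order $\exp(-cR^{p/(p-1)}/\tau^{1/(p-1)})$ to beat the volume on a short time slab, then iterate. You also rightly sense that $q(p-1)\ge 1$ enters through leftover nonnegative powers of $u$. But the step carrying all the weight is only asserted (``the outcome should be a bound''), and as you set it up it does not go through.

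Your weight $\xi=c\,d(x,x_0)^{p/(p-1)}(\tau-s)^{-1/(p-1)}$ has $\partial_s\xi>0$ for $x\neq x_0$, so no inequality $\partial_s\xi+c'|\nabla\xi|^{\gamma}\le 0$ can hold. The right condition is $\partial_s\xi+C\|u_0\|_{L^\infty}^{\delta}|\nabla\xi|^{p}\le 0$. The exponent must be $p$: Young's inequality on $|\nabla u^q|^{p-1}|\nabla\xi|$ produces $|\nabla\xi|^{p}$, and with $p/(p-1)$ the two terms do not even scale alike when $p\neq 2$. The factor $\|u_0\|_{L^\infty}^{\delta}$, $\delta=q(p-1)-1\ge 0$, comes from $u^{pq}\le \|u_0\|_{L^\infty}^{\delta}u^{q+1}$. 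Moreover, testing with $e^{\xi}\eta^{p}$ alone gives no coercive gradient term to absorb anything into. You need the Caccioppoli inequality, with test functions of type $u^{\lambda-1}e^{\xi}\varphi^{p}$ and $\sigma=pq$, so $\lambda=q+1$. It shows that $\int_M u^{q+1}e^{\xi}$ is non-increasing for non-positive $\xi$ satisfying that condition. Non-positivity matters: a spatially growing weight would need a priori finiteness of $\int_M u^{q+1}e^{\xi}$ to remove the cutoff.

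The more serious gap is the choice of centre. Even with the sign corrected, a weight built from $d(x,x_0)$ gives no smallness far away. The paper uses $\xi(x,s)=-\zeta\,d(x,A_r^c)^{p/(p-1)}[\|u_0\|_{L^\infty}^{\delta}(\tau-s)]^{-1/(p-1)}$, which vanishes on the far set $A_r^c$ and is very negative on $A$. Monotonicity then gives the Davies--Gaffney bound $\int_{A_r^c}u^{q+1}(\cdot,t)\le\int_{A^c}u_0^{q+1}+e^{-\zeta(\cdots)}\int_A u_0^{q+1}$. This is exponentially small only when $u_0$ vanishes outside $A=B(x_0,r_0)$. For general $u_0$ the flux through the annulus is driven by the data near the annulus and need not be exponentially small in $R$, so the bound you write cannot hold in general.

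The missing structure has two stages. First treat compactly supported $u_0$:
\begin{itemize}
\item apply Davies--Gaffney;
\item use Caccioppoli with a cutoff of the annulus to bound $\int_0^t\int_{B(x_0,2R)\setminus B(x_0,R)}|\nabla u^q|^{p}$ by $CR^{-p}\|u_0\|_{L^\infty}^{\delta}\int_0^t\int u^{q+1}$ over a slightly larger annulus;
\item use H\"older to pass to $|\nabla u^q|^{p-1}$ at the cost of $(t\,V(x_0,2R))^{1/p}$, which the volume bound absorbs for $t<c_0\|u_0\|_{L^\infty}^{-\delta}$.
\end{itemize}
Then pass to arbitrary $u_0\in L^1\cap L^\infty$ via the $L^1$-contraction $\int_M|u_n(\cdot,t)-u(\cdot,t)|\le\int_M|u_{0,n}-u_0|$ given by the comparison principle.

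Your fallback through finite propagation is not a substitute. For $q(p-1)=1$ there is no finite propagation speed. In the degenerate case the paper uses it only under the extra hypothesis $\mathrm{Ric}\ge 0$, whereas the $L^1$-contraction needs no geometry at all. This general-data version is also what makes your time iteration legitimate, since $u(\cdot,k\tau)$ is not compactly supported. The slab length $c_0\|u_0\|_{L^\infty}^{-\delta}$ can be reused because $\|u(\cdot,t)\|_{L^\infty}$ is non-increasing.
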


For example, (\ref{expvolint}) is satisfied by $\mathbb{R}^{n}$ and the hyperbolic space $\mathbb{H}^{n}$.

In the case $q(p-1)=1$, that is, when (\ref{evoeq}) becomes the \textit{Trudinger equation} $\partial _{t}u=\Delta _{p}u^{1/(p-1)}$ we also construct a class of spherically symmetric manifolds with a volume growth $V(x, r)\simeq \exp\left(C r^{\alpha}\right)$, where $\alpha>\frac{p}{p-1}$ and a solution of the Trudinger equation that does not satisfy (\ref{massconsint}) (cf. Section \ref{appendix}).

The method in the proof of Theorem \ref{expmassint} uses similar arguments as \cite{grigor1986stochastically}, where \textit{stochastic completeness} of manifolds was proved under the condition \begin{equation}\label{conjectureint}\int^{\infty}\frac{rdr}{\log V(x_0, r)}=\infty.\end{equation} Clearly, (\ref{expvolint}) implies (\ref{conjectureint}) when $p=2$. In particular, we prove and utilize a \textit{Davies-Gaffney} type inequality when (\ref{degandexcase}) holds, which was proved in \cite{surig2024sharp} in the case $q(p-1)=1$.

When $p=2$ and $q>1$, that is, (\ref{evoeq}) becomes the \textit{porous medium equation} $\partial _{t}u=\Delta u^{q}$, conservation of mass was proved by G. Grillo, M. Muratori and F. Punzo on \textit{Cartan-Hadamard} manifolds in \cite{grillo2018porous}. The novelty of our result Theorem \ref{expmassint} is that we prove conservation of mass on a general manifold satisfying (\ref{expvolint}) in the range (\ref{degandexcase}), in particular, we allow $q=1$ when $p\geq 2$, that is, (\ref{evoeq}) becomes the $p$-Laplace equation $\partial _{t}u=\Delta_p u$.

The second main result of the present paper is as follows (cf. Theorem \ref{expmasssing}).

\begin{theorem}\label{expmasssingint}
	Assume that, for some $x_0\in M$ and all large enough $r>0$, \begin{equation}\label{expvolsingint}V(x_0, r)\leq Cr^{N},\end{equation} where $C, N$ are positive constants.
	Suppose that \begin{equation}\label{singularint}q(p-1)<1\end{equation} and \begin{equation}\label{pnDint}p>N[1-q(p-1)].\end{equation} Let $u$ be a non-negative bounded solution to (\ref{evoeq}) in $M\times [0, \infty)$ with non-negative initial function $u(\cdot, 0)=u_0\in L^{1}(M)\cap L^{\infty}(M)$. Then (\ref{massconsint}) holds for all $t>0$.
\end{theorem}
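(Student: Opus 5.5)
The plan is the standard cut-off argument. We test the equation against a cut-off and show that the flux through the boundary of large balls vanishes in the limit. Fix $T>0$ and $R$ large. Take a Lipschitz cut-off $\varphi_R(x)=\min\{1,(2-d(x_0,x)/R)_+\}$, so that $\varphi_R=1$ on $B(x_0,R)$, $\operatorname{supp}\varphi_R\subset B(x_0,2R)$ and $|\nabla\varphi_R|\le 1/R$. Testing (\ref{evoeq}) with $\varphi_R$ gives, for $0<t\le T$,
\begin{equation*}
\Big|\int_M u(\cdot,t)\varphi_R-\int_M u_0\varphi_R\Big|\le \frac1R\int_0^t\int_{B(x_0,2R)\setminus B(x_0,R)}|\nabla u^q|^{p-1}\,d\mu\,ds .
\end{equation*}
Since $\int u_0\varphi_R\to\int u_0$, it suffices to show that the right-hand side tends to $0$ along a sequence $R\to\infty$. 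Letting $R\to\infty$ then also gives $\int u(\cdot,t)<\infty$.

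Next we estimate the gradient term on annuli using a Caccioppoli-type energy estimate. Let $A_R$ denote the annulus and $Q=A_R\times(0,t)$. By H\"older,
\begin{equation*}
\int_Q|\nabla u^q|^{p-1}\le\Big(\int_Q u^{-\gamma}|\nabla u^q|^p\Big)^{\frac{p-1}{p}}\Big(\int_Q u^{\gamma(p-1)}\Big)^{\frac1p},
\end{equation*}
with a weight exponent $\gamma$ to be chosen. The weighted energy is bounded by testing the equation with $u^{\sigma}\eta^p$, where $\eta$ is a cut-off for a slightly enlarged annulus and $\sigma=q-\gamma$ is suitably chosen. This gives a bound of the form
\begin{equation*}
\int_Q u^{-\gamma}|\nabla u^q|^p\lesssim \int_{\tilde A_R}u^{\sigma+1}(\cdot,0)+R^{-p}\int_{\tilde Q}u^{q(p-1)+\sigma+1-\dots}.
\end{equation*}
The exponents are arranged so that everything reduces to integrals of $u$ itself, of power close to $1$, over the annulus. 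Here we use that $u$ is bounded, so higher powers of $u$ are controlled by $u$. In the singular range $q(p-1)<1$, however, lower powers $u^{\beta}$ with $\beta<1$ inevitably appear. These are handled by H\"older with the volume:
\begin{equation*}
\int_{A_R}u^{\beta}\le \Big(\int_{A_R}u\Big)^{\beta}V(x_0,2R)^{1-\beta}\le C R^{N(1-\beta)}\Big(\int_{A_R}u\Big)^{\beta}.
\end{equation*}

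The main obstacle is making the powers of $R$ balance. After optimizing $\gamma$, the flux term should be bounded by
\begin{equation*}
C\,t^{\theta}R^{-1-\frac{p}{p}\cdot\dots}\,R^{N\frac{1-q(p-1)}{p}\cdot\dots}\,\Big(\sup_{s\le t}\int_{A_R}u(s)\Big)^{\kappa},
\end{equation*}
and the total exponent of $R$ is a positive multiple of $N[1-q(p-1)]-p$. Hypothesis (\ref{pnDint}) is exactly what makes this exponent negative. For orientation, in the scaling of Barenblatt solutions the flux across $\partial B_R$ behaves like $R^{(N[1-q(p-1)]-p)/(1-q(p-1))}$. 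A priori we need the mass of $u$ on annuli to be bounded. I would first prove the $L^1$ contraction/bound $\int_M u(\cdot,t)\le\int_M u_0$: test with $\varphi_R$ and monotone approximations of $\operatorname{sign}$, or argue by comparison with the approximating Dirichlet problems on balls, whose solutions increase to $u$. With this in hand, $\sup_{s}\int_{A_R}u(s)$ is bounded, and in fact tends to $0$. That is enough once the $R$-exponent is nonpositive, and borderline terms can be absorbed using this smallness. If the direct Caccioppoli balance fails, I would fall back on iterating over dyadic annuli with $\varphi$ replaced by $\varphi^{k}$ for large $k$, absorbing the energy term on the left.
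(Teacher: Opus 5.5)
Your outline follows the paper's own argument: flux through the annulus $A_R$, H\"older splitting of $|\nabla u^q|^{p-1}$ against a weight $u^{-\gamma}$, a Caccioppoli bound for the weighted energy, and H\"older against $V(x_0,2R)\le CR^N$ for powers of $u$ below $1$, using $\|u(\cdot,s)\|_{L^1}\le\|u_0\|_{L^1}$ (Lemma~\ref{monl1}). The gap is that the step that actually proves the theorem is not carried out. The choice of exponents and the count of powers of $R$ are left as ``$\dots$'' and ``should be bounded by'', with a fallback in case they fail. Moreover, one of the devices you propose, controlling powers of $u$ above $1$ by $\|u\|_{L^\infty}$, is exactly what must be avoided.

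Write $m=q-\gamma$ for your test exponent (your $\sigma$) and $D=1-q(p-1)$. Caccioppoli produces $R^{-p}\int_0^t\!\int u^{m+q(p-1)}$, and H\"older produces $\int_0^t\!\int_{A_R}u^{(q-m)(p-1)}$. If $m>D$, the first power exceeds $1$. Bounding it by $\|u\|_{L^\infty}^{m-D}\int u$ forfeits the volume factor and gives a flux bound $R^{-p+\frac Np(D+m(p-1))}$. This is strictly worse than $R^{ND-p}$, so it yields only a non-sharp condition. You must keep every power at most $1$, i.e.\ $0<m\le\min(D,q)$.

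The paper takes $m=\varepsilon\in(0,\min(D,q))$, i.e.\ $\sigma=q(p-1)+\varepsilon$ in Lemma~\ref{Lem1}. The two factors are then at most $Ct\|u_0\|_{L^1}^{1-D+\varepsilon}R^{-p+N(D-\varepsilon)}$ and $Ct\|u_0\|_{L^1}^{(q-\varepsilon)(p-1)}R^{N(D+\varepsilon(p-1))}$, and $\varepsilon$ cancels:
\begin{equation*}
\frac1R\int_0^t\int_{A_R}|\nabla u^q|^{p-1}\le Ct\|u_0\|_{L^1}^{1-D}R^{ND-p}.
\end{equation*}
So the multiple of $ND-p$ is exactly $1$, and no optimisation over $\gamma$ is needed. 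Neither your unproved claim $\sup_{s\le t}\int_{A_R}u(s)\to0$ nor absorption or dyadic iteration plays any role.

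The second missing ingredient is the Caccioppoli inequality itself in this range. When $pq>1$, which is compatible with $q(p-1)<1$, one has $D<q$, so every admissible $m$ is $<q$. Then $u^m\eta^p=(u^q)^{m/q}\eta^p$ is not a legitimate test function: it is not Lipschitz in $u^q$ at $u=0$, and the weighted energy $\int u^{m-q}|\nabla u^q|^p$ is not a priori finite. The Caccioppoli inequality available before this paper covers only $\sigma\ge pq$, i.e.\ test exponent $m\ge q$. This is precisely why the paper proves Lemma~\ref{Lem1} for all $\sigma>q(p-1)$, testing with $\bigl((u^q+\tau)^{m/q}-\tau^{m/q}\bigr)\eta^p$ and letting $\tau\to0$. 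You treat this step as routine.

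Finally, suppose you keep the initial term $\int u_0^{1+m}\eta^p$ instead of following the paper, which first assumes $\operatorname{supp}u_0$ compact and then approximates via $L^1$-contraction. Then this term contributes $o(1)\,R^{-1+\frac Np(D+m(p-1))}$ to the flux. That vanishes only if $m$ is also small enough that $N(D+m(p-1))<p$.
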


Note that $\mathbb{R}^{n}$ satisfies (\ref{expvolsingint}) with $N=n$. Hence, we see that in this case (\ref{pnDint}) matches (\ref{critical}).

A finite extinction time for solutions of (\ref{evoeq}) on manifolds satisfying an euclidean-type Sobolev inequality and under certain conditions on $p$ and $q$ was proved in \cite{surig2024finite}. However, the range obtained in \cite{surig2024finite} was not optimal. In the present paper we improve in Proposition \ref{thmfinexint} the range of $p$ and $q$ which matches (\ref{noncrit}) in the case when $M=\mathbb{R}^{n}$ (cf. Example \ref{exampleRn}).

Moreover, combining Proposition \ref{thmfinexint} with the Poincaré inequality and the Sobolev inequality in $\mathbb{H}^{n}$, we obtain a finite extinction time for solutions of (\ref{evoeq}) in $\mathbb{H}^{n}$ in the whole range (\ref{singularint}), in particular, when also (\ref{critical}) holds (cf. Example \ref{exampleHn}). Recall that in $\mathbb{H}^{n}$ we have $V(x, r)\simeq \exp((n-1)r)$. Hence, this example shows that the range (\ref{critical}) is not universal to obtain mass conservation on manifolds. When $p=2$, that is (\ref{singularint}) becomes $q<1$, finite extinction time in $\mathbb{H}^{n}$ was proved by M. Bonforte, G. Grillo and J.L. Vazquez in \cite{bonforte2008fast}.
Proposition \ref{thmfinexint} extends this property to the Leibenson equation (\ref{evoeq}) with $p>1$.

The structure of the present paper is as follows.

In Section \ref{secweak}, we
define the notion of a weak solution of the Leibenson equation (\ref{evoeq}). We also prove in Section \ref{secweak} in Lemma \ref{Lem1} a Caccioppoli type inequality, which generalizes a Caccioppoli type inequality from \cite{grigor2024finite}.

In Section \ref{intestsec} we prove in Lemma \ref{choicofxi} the aforementioned Davies-Gaffney type inequality, which is then used in the proof of Theorem \ref{expmassint}.

In Section \ref{seccons} we prove our main results Theorem \ref{expmassint} and Theorem \ref{expmasssingint}. In this section we also prove in Proposition \ref{propfps} conservation of mass in the \textit{degenerate} case $q(p-1)>1$ on any manifold with non-negative \textit{Ricci-curvature}, using the fact that we have a \textit{finite propagation speed} in this situation (cf. \cite{grigor2024finite}). 

From Theorem \ref{expmassint} we also obtain in Proposition \ref{propLiou} a $L^{p-1}$- Liouville property, which gives a partial answer to a conjecture stated by I. Holopainen \cite{holopainen2000sharp} (cf. Remark \ref{remholo}). In the linear case $p=2$, this conjecture was proved by A. Grigor'yan in \cite{grigor1989stochastically} by showing that stochastic completeness implies the $L^{1}$- Liouville property.

In Section \ref{secextinct} we discuss finite extinction time by means of the examples $\mathbb{R}^{n}$ and $\mathbb{H}^{n}$.

In Section \ref{appendix} we construct the aforementioned class of spherically symmetric manifolds with a faster volume growth than (\ref{expvolint}) that admits a solution of (\ref{evoeq}) in the case $q(p-1)=1$ that does not satisfy (\ref{massconsint}).

For existence results for weak solutions of (\ref{evoeq}) on Riemannian manifolds, we refer to \cite{surig2026existence}.

We denote by $c, C, C^{\prime}$ positive constants whose value might change at each occurrence.
	
	\begin{acknowledgement}
		The author would like to thank Alexander Grigor'yan for many helpful suggestions.
	\end{acknowledgement}

	\section{Weak subsolutions}
	
	\label{secweak}
	
	We consider in what follows the following non-linear evolution
	equation on a Riemannian manifold $M$:%
	\begin{equation}
		\partial _{t}u=\Delta _{p}u^{q}.  \label{dtv}
	\end{equation} 
	By a \textit{subsolution} of (\ref{dtv}) we mean a non-negative function $u$
	satisfying $$\partial _{t}u\leq\Delta _{p}u^{q}$$
	in a certain weak sense as explained below.
	
	We assume throughout that 
	\begin{equation*}
		p>1\quad\textnormal{and}\quad q>0.
	\end{equation*}%
	
	Let $\mu $ denote the Riemannian measure on $M$. For simplicity of notation,
	we frequently omit in integrations the notation of measure. All integration
	in $M$ is done with respect to $d\mu $, and in $M\times \mathbb{R}$ -- with
	respect to $d\mu dt$, unless otherwise specified.
	
	Let $\Omega$ be an open set in $M$ and $I$ be an interval in $[0, \infty)$.
	
	\begin{definition}
		\normalfont
		We say that a non-negative function $u=u(x, t)$ is a \textit{weak
			subsolution} of (\ref{dtv}) in $\Omega\times I$, if 
		\begin{equation}  \label{defvonsoluq}
			u\in C\left(I; L^{q+1}(\Omega)\right)\quad \textnormal{and}\quad 
			u^{q}\in L_{loc}^{ p}\left(I; W^{1, p}(\Omega)\right),
		\end{equation} where (\ref{dtv}) holds weakly in $\Omega\times I$, which means that for all $t_{1}, t_{2}\in I$ with $t_{1}<t_{2}$, and all non-negative \textit{test functions} 
		\begin{equation}  \label{defvontestsoluq}
			\psi\in W_{loc}^{1, 1+\frac{1}{q}}\left(I;
			L^{1+\frac{1}{q}}(\Omega)\right)\cap L_{loc}^{p}\left(I; W_{0}^{1,
				p}(\Omega)\right),
		\end{equation}
		we have 
		\begin{equation}  \label{defvonweaksolq}
			\left[\int_{\Omega}{u\psi }\right]_{t_{1}}^{t_{2}}+\int_{t_{1}}^{t_{2}}{%
				\int_{\Omega}{-u\partial_{t}\psi+|\nabla u^{q}|^{p-2}\langle\nabla u^{q},
					\nabla \psi\rangle}}\leq 0.
		\end{equation}
	\end{definition}
	
	\textit{Weak supersolutions} and \textit{weak solutions} of (\ref{dtv}) are
	defined analogously.
	
	Existence results for weak solutions of (\ref{dtv}) were obtained in \cite{bogelein2018doubly, coulhon2016regularisation,  ishige1996existence, ivanov1997regularity, lindqvist2026lipschitz} in the Euclidean setting and in \cite{surig2026existence} on manifolds.
	
	Let us also set \begin{equation}\label{delta}\delta=q(p-1)-1.\end{equation}
	
	\begin{lemma}[Caccioppoli-type inequality]
		\label{Lem1} Let $\Omega $ be an open subset of $M$ and let $I$ be an interval in $\mathbb{R}_{+}=[0, \infty)$. Let $u=u\left( x,t\right) $ be a bounded
		non-negative subsolution to \emph{(\ref{dtv})} in a cylinder $\Omega\times I$. Let $\eta $ be a locally Lipschitz non-negative bounded
		function in $\Omega\times I$ such that $\eta(\cdot, t)$ has
		compact support in $\Omega $ for any $t\in I$. Fix some real $\sigma $
		such that 
		$\sigma > q(p-1)$ and set $\lambda=\sigma-\delta>1$. 
		Choose $t_{1},t_{2}\in I$ such that $t_{1}<t_{2}$. Then%
		\begin{equation}
			\left[ \int_{\Omega }u^{\lambda }\eta^{p} \right] _{t_{1}}^{t_{2}}+c_{1}
			\int_{t_{1}}^{t_{2}}
				\int_{\Omega}\left\vert \nabla \left( u^{\sigma/p }\eta \right) \right\vert
			^{p} \leq \int_{t_{1}}^{t_{2}}
				\int_{\Omega}\left[c_2u^{\sigma}|\nabla \eta|^{p}+pu^{\lambda}\eta^{p-1}\partial _{t}\eta\right],
			\label{vetacor}
		\end{equation}
		where $c_{1}, c_2$ are constants depending on $p, q$ and $\sigma$. 
	\end{lemma}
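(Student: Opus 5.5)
We test the subsolution inequality (\ref{defvonweaksolq}) with $\psi=u^{\lambda-1}\eta^{p}$, or more precisely with a regularized version of it. The plan has three parts: handle the time derivative by a chain-rule identity, split the gradient term into a good part and an error part, and close with Young's inequality.

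\emph{Time term.} Since $\lambda>1$, the power $u^{\lambda-1}$ is a locally Lipschitz function of $u$ on bounded sets. Formally we have
\[
\int_\Omega u\,\partial_t\psi \;=\; \int_\Omega u\,\partial_t\big(u^{\lambda-1}\eta^p\big),
\]
and the chain-rule identity $u\,\partial_t u^{\lambda-1}=\frac{\lambda-1}{\lambda}\partial_t u^{\lambda}$ turns $\big[\int u\psi\big]_{t_1}^{t_2}-\int\!\!\int u\,\partial_t\psi$ into
\[
\frac1\lambda\Big[\int_\Omega u^\lambda\eta^p\Big]_{t_1}^{t_2}-\frac{p}{\lambda}\int_{t_1}^{t_2}\!\!\int_\Omega u^\lambda\eta^{p-1}\partial_t\eta .
\]
This is where $\lambda>1$ is used. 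We then multiply the whole inequality by $\lambda$, which produces the coefficient $p$ in front of $u^{\lambda}\eta^{p-1}\partial_t\eta$ in (\ref{vetacor}).

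\emph{Gradient term.} Write $w=u^q$. Then $u^{\lambda-1}=w^{(\lambda-1)/q}$, and we expand
\[
|\nabla w|^{p-2}\langle\nabla w,\nabla(w^{(\lambda-1)/q}\eta^p)\rangle
=\tfrac{\lambda-1}{q}\,w^{\frac{\lambda-1}{q}-1}|\nabla w|^p\eta^p
+p\,w^{\frac{\lambda-1}{q}}\eta^{p-1}|\nabla w|^{p-2}\langle\nabla w,\nabla\eta\rangle .
\]
The first term is the good term. Using $\nabla w=q u^{q-1}\nabla u$, it equals a constant times $u^{\lambda-1+q-1-q+qp-p+?}$; it is easiest to keep track of exponents through $\sigma$. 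Indeed
\[
u^{\lambda-1-q}\,u^{(q-1)p}|\nabla u|^p=u^{\sigma-p}|\nabla u|^p ,
\]
since $\lambda-1-q+qp-p=\lambda+\delta-p=\sigma-p$. Hence the good term is $c\,|\nabla u^{\sigma/p}|^p\eta^p$.

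The cross term is bounded by Young's inequality with weight $\varepsilon$ by
\[
\varepsilon\, u^{\sigma-p}|\nabla u|^p\eta^p+C_\varepsilon u^\sigma|\nabla\eta|^p ,
\]
because the exponents match: $w^{(\lambda-1)/q}|\nabla w|^{p-1}\sim u^{\lambda-1+(q-1)(p-1)}|\nabla u|^{p-1}$ and $\lambda-1+(q-1)(p-1)=\sigma(p-1)/p+\sigma/p-1-\ldots$. Concretely, one splits it as $(u^{(\sigma-p)(p-1)/p}|\nabla u|^{p-1}\eta^{p-1})(u^{\sigma/p}|\nabla\eta|)$, and checking that the total exponent equals $\sigma-1$ is routine. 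Absorbing the $\varepsilon$-term into the good term gives
\[
c\int\!\!\int u^{\sigma-p}|\nabla u|^p\eta^p\le\ldots
\]
Finally, $|\nabla(u^{\sigma/p}\eta)|^p\le 2^{p-1}\big(|\nabla u^{\sigma/p}|^p\eta^p+u^\sigma|\nabla\eta|^p\big)$ converts this into the left-hand side of (\ref{vetacor}), at the cost of enlarging $c_2$.

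\emph{Main obstacle: justification.} The function $\psi$ need not be admissible in (\ref{defvontestsoluq}). Two things can fail: $u$ need not be time-differentiable, and $u^{\lambda-1}$ may fail to lie in $W^{1,p}$ near $\{u=0\}$ when $\lambda-1<q$. We handle this as follows. We replace $u$ by $u+\varepsilon$, or by Steklov averages $u_h$ in time, and write $u^{\lambda-1}$ as a Lipschitz function of $u^q$ through $(u^q)^{(\lambda-1)/q}$, truncated near $0$. After applying the Steklov-averaged weak formulation, we pass to the limit $h\to0$ using $u\in C(I;L^{q+1})$ together with the boundedness of $u$. We then let $\varepsilon\to0$ by monotone and dominated convergence, and Fatou's lemma handles the gradient term. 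The condition $\sigma>q(p-1)$, i.e. $\lambda>1$, is what keeps all the resulting powers nonnegative and integrable, so that the limits are controlled.
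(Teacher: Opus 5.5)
Your proposal is correct and follows essentially the paper's route. The paper tests with $f_\varepsilon(u^q)\eta^p$, where $f_\varepsilon(s)=(s+\varepsilon)^a-\varepsilon^a$ and $a=(\lambda-1)/q$; this is exactly your regularization of $(u^q)^{(\lambda-1)/q}$ near $0$, which is only needed when $\sigma<pq$, since the case $\sigma\ge pq$ is cited. It then expands the gradient term, absorbs the cross term by Young via the $\varepsilon$-uniform bound $f_\varepsilon(u^q)^p(u^q+\varepsilon)^{(1-a)(p-1)}\le Cu^{\sigma}$ (the one check you leave implicit), and lets $\varepsilon\to0$.

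Two slips to fix. First, $u^{\lambda-1}$ is not Lipschitz in $u$ for $\lambda<2$; what actually matters is Lipschitz dependence on $u^q$, which fails when $\lambda<1+q$. Second, the exponent of $u$ in the cross term is $\sigma-p+1$, not $\sigma-1$, as your own splitting shows.
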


\begin{proof}
In the case $\sigma\geq pq$, the proof works as in \cite{grigor2024finite}. Hence it remains to prove (\ref{vetacor}) in the case $q(p-1)< \sigma <qp$. Set $a=\frac{\lambda-1}{q}=\frac{\sigma-q(p-1)}{q}\in (0, 1)$. Consider, for any $\varepsilon>0$, the function $f_\varepsilon(s)=(s+\varepsilon)^{a}-\varepsilon^a$. Note that $f_\varepsilon(0)=0$ and $f_\varepsilon$ is a Lipschitz function. Hence, we can test with the function $\psi=f_\varepsilon(u^q)\eta^{p}\theta_\nu$ in Lemma 2.5 in \cite{Grigoryan2024}, where $\theta_\nu$ is the cut-off function of $[t_1, t_2]$ defined in Lemma 2.9 in \cite{Grigoryan2024}. Then using the same arguments as in Lemma 2.9 in \cite{Grigoryan2024}, we obtain \begin{equation}\label{beforecomp}\left[\int_{\Omega}{F_\varepsilon(u)\eta^{p}}\right]_{t_{1}}^{t_{2}}\leq \int_{Q}{-\langle|\nabla u^{q}|^{p-2}\nabla u^{q}, \nabla(f_\varepsilon(u^{q})\eta^{p})\rangle +pF_\varepsilon(u)\eta^{p-1}\partial_{t}\eta } ,\end{equation} where $Q=\Omega\times [t_1, t_2]$ and $$F_{\varepsilon}(s)=\int_{0}^{s}f_\varepsilon(r^q)dr.$$
We have $$\langle|\nabla u^{q}|^{p-2}\nabla u^{q}, \nabla(f_\varepsilon(u^{q})\eta^{p})\rangle=a\frac{|\nabla u^{q}|^{p}\eta^p}{(u^{q}+\varepsilon)^{1-a}}+pf_\varepsilon(u^{q})\eta^{p-1}|\nabla u^{q}|^{p-2}\langle \nabla u^{q}, \nabla \eta\rangle.$$
By Young's inequality, we obtain, for any $b>0$, $$f_\varepsilon(u^{q})\eta^{p-1}|\nabla u^{q}|^{p-1} |\nabla \eta|\leq  b^{p/(p-1)}\frac{|\nabla u^{q}|^{p}\eta^p}{(u^{q}+\varepsilon)^{1-a}}+\frac{1}{b^{p}}f_\varepsilon(u^{q})^p(u^{q}+\varepsilon)^{(1-a)(p-1)}|\nabla \eta|^{p}.$$ Since $0<a<1$, we see that $$f_\varepsilon(u^{q})^p(u^{q}+\varepsilon)^{(1-a)(p-1)}\leq Cu^{q(a+p-1)}=Cu^{\sigma}.$$ Substituting this into (\ref{beforecomp}) yields $$\left[\int_{\Omega}{F_\varepsilon(u)\eta^{p}}\right]_{t_{1}}^{t_{2}}+(a-pb^{p/(p-1)})\int_{Q}\frac{|\nabla u^{q}|^{p}\eta^p}{(u^{q}+\varepsilon)^{1-a}}\leq \int_{Q}{C u^{\sigma}|\nabla \eta|^{p} +pF_\varepsilon(u)\eta^{p-1}\partial_{t}\eta }.$$
Choosing $b>0$ small enough so that $a-pb^{p/(p-1)}>0$ we obtain  \begin{equation}\label{choosingbsmall}\left[\int_{\Omega}{F_\varepsilon(u)\eta^{p}}\right]_{t_{1}}^{t_{2}}+c\int_{Q}\frac{|\nabla u^{q}|^{p}\eta^p}{(u^{q}+\varepsilon)^{1-a}}\leq \int_{Q}{C u^{\sigma}|\nabla \eta|^{p} +pF_\varepsilon(u)\eta^{p-1}\partial_{t}\eta }.\end{equation}
Note that $f_{\varepsilon}(u^q)\to u^{qa}=u^{\lambda-1}$ and thus $$F_{\varepsilon}(u)\to u^{\lambda}/\lambda \quad\textnormal{as}~ \varepsilon\to 0.$$
Consider also the function $$g_\varepsilon(s)=\int_{0}^{s}\frac{dr}{(r+\varepsilon)^{(1-a)/p}}.$$
Then $g^{\prime}(s)=(s+\varepsilon)^{-(1-a)/p}$ so that  $$|\nabla g_\varepsilon(u^q)|^p=\frac{|\nabla u^{q}|^p}{(u^{q}+\varepsilon)^{1-a}}$$ and $$\lim_{\varepsilon\to 0}g_\varepsilon(u^q)=Cu^{q(a+p-1)/p}=Cu^{\sigma/p}.$$ Therefore, $$\int_Q |\nabla u^{\sigma/p}|^p\eta^p\leq C \liminf_{\varepsilon\to 0}\int_{Q}\frac{|\nabla u^{q}|^{p}\eta^p}{(u^{q}+\varepsilon)^{1-a}}.$$
It follows from (\ref{choosingbsmall}) that $$\left[\int_{\Omega}{u^{\lambda}\eta^{p}}\right]_{t_{1}}^{t_{2}}+c\int_{Q}|\nabla u^{\sigma/p}|^p\eta^p\leq \int_{Q}{C u^{\sigma}|\nabla \eta|^{p} +pu^{\lambda}\eta^{p-1}\partial_{t}\eta }.$$
Since $$|\nabla(u^{\sigma/p}\eta)|^{p}\leq C|\nabla u^{\sigma/p}|^p\eta^p+Cu^{\sigma}|\nabla \eta|^p,$$ we conclude (\ref{vetacor}).
\end{proof}
	
	\begin{lemma}[Lemma 2.9 \cite{Grigoryan2024}]
		\label{monl1}
		Let $u=u\left( x,t\right) $ be a non-negative bounded subsolution to \emph{(\ref{dtv})} in $M\times I$  with non-negative initial function $u(\cdot, 0)=u_0\in L^{1}(M)\cap L^{\infty}(M)$. If $\sigma\geq 1$, including $\sigma=\infty$, then the function 
		\begin{equation*}
			t\mapsto \left\Vert u(\cdot ,t)\right\Vert _{L^{\sigma}(M)}
		\end{equation*}%
		is monotone decreasing in $I$.
	\end{lemma}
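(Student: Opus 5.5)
The plan is to prove monotonicity of $t\mapsto\|u(\cdot,t)\|_{L^\sigma(M)}$ first for finite $\sigma>1$ by testing the subsolution inequality with a truncated power of $u$ times a spatial cut-off. The cases $\sigma=1$ and $\sigma=\infty$ will then follow by limiting arguments.

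Fix $1<\sigma<\infty$ and $t_1<t_2$ in $I$. Choose a time-independent cut-off $\eta=\eta_R$ with $\eta\equiv1$ on $B(x_0,R)$, $\operatorname{supp}\eta\subset B(x_0,2R)$ and $|\nabla\eta|\le 1/R$. Since $\eta$ does not depend on $t$, the term $\partial_t\eta$ vanishes.

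The main step is the following inequality:
\[
\Big[\int_M F_\varepsilon(u)\eta^p\Big]_{t_1}^{t_2}\le -\int_{t_1}^{t_2}\!\!\int_M |\nabla u^q|^{p-2}\langle\nabla u^q,\nabla(f_\varepsilon(u^q)\eta^p)\rangle ,
\]
where $f_\varepsilon$ is an increasing Lipschitz function of $u^q$ vanishing at $0$, chosen so that $F_\varepsilon(s)=\int_0^s f_\varepsilon(r^q)\,dr\to s^\sigma/\sigma$. For instance, take $f_\varepsilon(s)=(s+\varepsilon)^{a}-\varepsilon^a$ with $a=(\sigma-1)/q$. The inequality is obtained by testing with $\psi=f_\varepsilon(u^q)\eta^p\theta_\nu$, with the Steklov/time cut-off $\theta_\nu$, exactly as in the proof of Lemma \ref{Lem1}.

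Expanding the gradient gives a good term $\int f_\varepsilon'(u^q)|\nabla u^q|^p\eta^p\ge0$, which we simply discard. What remains is the error term
\[
p\int\!\!\int f_\varepsilon(u^q)\eta^{p-1}|\nabla u^q|^{p-1}|\nabla\eta|
\;\le\; \frac{p}{R}\,\sup_t\|f_\varepsilon(u^q)\|_{L^\infty}\int_{t_1}^{t_2}\!\!\int_M|\nabla u^q|^{p-1}.
\]
This bound is finite because $u$ is bounded and $u^q\in L^p_{loc}(I;W^{1,p})$. The global integrability of $|\nabla u^q|^{p-1}$ needed here is delicate, and this is the main obstacle. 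To handle it, I would use Hölder's inequality on the annulus $B(x_0,2R)\setminus B(x_0,R)$, writing $|\nabla u^q|^{p-1}=|\nabla u^q|^{p-1}\mathbf 1_{\text{annulus}}$. I would then bound
\[
\int |\nabla u^q|^{p-1}f_\varepsilon(u^q)|\nabla\eta|\le \Big(\int |\nabla u^q|^p\,\mathbf 1_{\text{ann}}\, g\Big)^{(p-1)/p}\Big(\int h\,|\nabla\eta|^p\Big)^{1/p}
\]
with suitable powers $g,h$ of $u$. The factor $\int|\nabla u^q|^p\mathbf 1_{\text{ann}}\cdot g$ tends to $0$ as $R\to\infty$ because it is the tail of a convergent integral: $\nabla u^q\in L^p(M\times[t_1,t_2])$ and $g$ is bounded. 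The second factor is bounded by $R^{-1}\|u\|_\infty^{\cdot}\,\big(\int u^{\sigma}\big)^{1/p}$, which stays bounded as $R\to\infty$, since $u(t)\in L^{q+1}\cap L^\infty$. If that is not directly available, I would first work with $u\in L^1\cap L^\infty$ propagated from $u_0$, and choose $\sigma$ large enough that $u^\sigma\le Cu^{q+1}$.

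Letting $R\to\infty$ makes the right-hand side vanish. Letting then $\varepsilon\to0$ by monotone convergence yields $\int u^\sigma(t_2)\le\int u^\sigma(t_1)$.

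For small $\sigma$, down to $\sigma=1$, the integrability $u(t)\in L^\sigma$ is not a priori known. I would handle this by an induction/bootstrap downward from $\sigma=q+1$. Alternatively, I would use Fatou's lemma, so that $\int u^\sigma(t_2)\eta^p\le\int u^\sigma(t_1)+o(1)$ whenever the right side is finite, starting from $t_1=0$ where $u_0\in L^1\cap L^\infty$. For $\sigma=1$, the test function $f(u^q)\equiv1$ is not admissible, so I would approximate $\min(u^q/\varepsilon,1)$. Finally, the case $\sigma=\infty$ follows from $\|u\|_{L^\infty}=\lim_{\sigma\to\infty}\|u\|_{L^\sigma}$ on sets of finite measure, applied locally on balls using the localized inequality above.
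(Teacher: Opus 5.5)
Your overall scheme is the right one, and it is the technique the paper borrows from \cite{Grigoryan2024} in the proof of Lemma \ref{Lem1}: test with $f_\varepsilon(u^q)\eta_R^p\theta_\nu$, drop $\int\!\!\int f_\varepsilon'(u^q)|\nabla u^q|^p\eta^p\ge 0$, send $R\to\infty$ and then $\varepsilon\to0$, and get $\sigma=\infty$ as a limit. For $\sigma\ge\max(q+1,\frac{p}{p-1})$ your estimate of the cut-off term is fine.

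The gap is the range $1\le\sigma<\max(q+1,\frac{p}{p-1})$, in particular $\sigma=1$. That is exactly the case the paper uses, e.g.\ $\int_M u(\cdot,t)\le\|u_0\|_{L^1}$ in Theorem \ref{expmasssing} and in the extension step of Theorem \ref{expmass}.

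Your splitting $f(u^q)=g^{(p-1)/p}h^{1/p}$, with $g$ bounded and $h\lesssim u^{\sigma}$, fails there for two reasons.
\begin{itemize}
\item It needs $u^{\sigma-1}\lesssim u^{\sigma/p}$ near $u=0$, i.e.\ $\sigma\ge p/(p-1)>1$, so it can never reach $\sigma=1$.
\item It needs $\int_{t_1}^{t_2}\!\int_M u^\sigma<\infty$, which for $\sigma<q+1$ is part of what you are proving. Falling back on ``$u\in L^1\cap L^\infty$ propagated from $u_0$'' is circular, since that is the case $\sigma=1$ of the statement.
\end{itemize}

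Your remedies for small $\sigma$ also aim at the wrong obstruction. A priori finiteness of $\int u^\sigma(\cdot,t)$ is not the issue: for fixed $\varepsilon$, $F_\varepsilon(u)\le C_\varepsilon u^{q+1}$ is integrable because $u\in C(I;L^{q+1}(M))$, and the limiting inequality can be read in $[0,\infty]$. The real issue is that the cut-off term must vanish as $R\to\infty$. A downward bootstrap does not give this, because finiteness of higher $L^{\sigma'}$ norms says nothing about small powers of $u$ at infinity. Fatou's lemma does not give it either; the ``$+o(1)$'' in your Fatou sentence is simply asserted.

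The missing ingredient is already in the definition (\ref{defvonsoluq}). The condition $u^q\in L^p_{loc}(I;W^{1,p}(M))$ gives $u^q\in L^p(M\times[t_1,t_2])$, not only $\nabla u^q\in L^p(M\times[t_1,t_2])$.

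For fixed $\varepsilon$ your functions satisfy $f_\varepsilon(s)\le C_\varepsilon s$ on the range of $u^q$:
\begin{itemize}
\item for $1<\sigma<q+1$, concavity gives $(s+\varepsilon)^a-\varepsilon^a\le a\varepsilon^{a-1}s$ when $a<1$;
\item for $\sigma=1$, $\min(s/\varepsilon,1)\le s/\varepsilon$;
\item for $\sigma\ge q+1$, take $f(s)=s^{(\sigma-1)/q}$ with no regularization and use boundedness of $u$.
\end{itemize}
Hence, by H\"older,
\[
p\int_{t_1}^{t_2}\!\!\int_M f_\varepsilon(u^q)\eta^{p-1}|\nabla u^q|^{p-1}|\nabla\eta|\le\frac{pC_\varepsilon}{R}\Big(\int_{t_1}^{t_2}\!\!\int_M|\nabla u^q|^p\Big)^{\frac{p-1}{p}}\Big(\int_{t_1}^{t_2}\!\!\int_M u^{pq}\Big)^{\frac1p}\longrightarrow0
\]
as $R\to\infty$, for every $\sigma\ge1$. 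This gives $\int_M F_\varepsilon(u(\cdot,t_2))\le\int_M F_\varepsilon(u(\cdot,t_1))$.

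For $\sigma<q+1$, monotone convergence as $\varepsilon\downarrow0$ then gives $\int_M u^\sigma(\cdot,t_2)\le\int_M u^\sigma(\cdot,t_1)$ in $[0,\infty]$, since $F_\varepsilon(u)\uparrow u^\sigma/\sigma$ (resp.\ $\uparrow u$ when $\sigma=1$). Finiteness follows by taking $t_1=0$. No bootstrap is needed, and your limiting argument for $\sigma=\infty$ then works globally, since $u(\cdot,t)\in L^{q+1}\cap L^\infty$.
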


\section{Integral estimates for subsolutions}
\label{intestsec}

Let $M$ be a connected Riemannian manifold.
Let $d$ be the geodesic distance on $M$. For any $%
x\in M$ and $r>0$, denote by $B(x,r)$ the geodesic ball of radius $r$
centered at $x$, that is,%
\begin{equation*}
	B(x,r)=\left\{ y\in M:d(x,y)<r\right\}.
\end{equation*}
We denote its volume by $$V(x, r)=\mu(B(x, r)).$$

The Davies-Gaffney type estimate for solutions of (\ref{dtv}) was proved in the case $\delta=0$ in \cite{surig2024sharp}.

Here, we extend it to $\delta\geq 0$, that is, $q(p-1)\geq 1$.

\subsection{Integral maximum principle}
\label{intmaxp}

\begin{lemma}\label{integdec}
	Let $u$ be a non-negative bounded subsolution of (\ref{dtv}) in $M\times [0, \infty)$, with non-negative initial function $u(\cdot, 0)=u_0\in L^{1}(M)\cap L^{\infty}(M)$. Fix some $\sigma\geq pq$ and set $\lambda=\sigma-\delta$. Let $\xi(x, t)$ be a non-positive locally Lipschitz function in $M\times[0, \infty)$ and assume that the partial derivative $\partial_{t}\xi$ satisfies the inequality \begin{equation}\label{condfordec}\partial_{t}\xi+C||u_0||_{L^{\infty}}^{\delta}|\nabla \xi|^{p}\leq 0,\end{equation} for some positive constant $C$ depending on $p, q, \sigma$.
	Then the function \begin{equation}\label{defJ}J(t)=\int_{M}u^{\lambda}(\cdot, t)e^{\xi(\cdot, t)}\end{equation} is non-increasing in $t\in [0, \infty)$.
\end{lemma}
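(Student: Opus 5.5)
We prove the monotonicity of $J$ by running the argument behind Lemma \ref{Lem1}, with the test function weighted by $e^{\xi}$. Since $\xi$ need not be compactly supported, we first localize. Let $\varphi_R$ be a Lipschitz cut-off with $\varphi_R=1$ on $B(x_0,R)$, $\varphi_R=0$ outside $B(x_0,2R)$ and $|\nabla\varphi_R|\le 1/R$. We apply the weak formulation with $\psi=u^{\lambda-1}e^{\xi}\varphi_R^p$ (for $\lambda-1$ small one regularizes exactly as in Lemma \ref{Lem1}). Since $\sigma\ge pq$, we have $\lambda-1\ge q$, so the test function is admissible, as in \cite{grigor2024finite}. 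This gives
\[
\Big[\int u^\lambda e^\xi\varphi_R^p\Big]_{t_1}^{t_2}\le \int_{t_1}^{t_2}\!\!\int \Big(\tfrac{\lambda}{\lambda-1}\cdots\Big),
\]
that is, up to the factor $\lambda$,
\begin{align*}
\Big[\tfrac1\lambda\int u^\lambda e^\xi\varphi_R^p\Big]_{t_1}^{t_2}\le \int\!\!\int\Big(&-(\lambda-1)q^{p-1}u^{q(p-1)+\lambda-2}|\nabla u|^p e^\xi\varphi_R^p
+\tfrac1\lambda u^\lambda e^\xi\varphi_R^p\,\partial_t\xi\\
&+q^{p-1}u^{q(p-1)+\lambda-1}|\nabla u|^{p-1}e^\xi\varphi_R^p|\nabla\xi|
+ p\,q^{p-1}u^{q(p-1)+\lambda-1}|\nabla u|^{p-1}e^\xi\varphi_R^{p-1}|\nabla\varphi_R|\Big).
\end{align*}
Here we used $\nabla e^\xi=e^\xi\nabla\xi$ and $\partial_t e^{\xi}=e^\xi\partial_t\xi$.

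The key step is to absorb the $\nabla\xi$ term using Young's inequality:
\[
u^{q(p-1)+\lambda-1}|\nabla u|^{p-1}|\nabla\xi|\le \epsilon\, u^{q(p-1)+\lambda-2}|\nabla u|^p+C_\epsilon\, u^{q(p-1)+\lambda-2+p}\,u^{-(p-1)}\cdots|\nabla\xi|^p .
\]
Tracking exponents, the remainder is $C_\epsilon u^{q(p-1)+\lambda-1}|\nabla\xi|^p=C_\epsilon u^{\delta}\,u^\lambda|\nabla\xi|^p$, since $q(p-1)-1=\delta$. Because $\delta\ge0$, Lemma \ref{monl1} gives $u^\delta\le\|u_0\|_{L^\infty}^\delta$. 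Choosing $\epsilon$ small and $C$ in (\ref{condfordec}) large in terms of $p,q,\sigma$, the $\nabla\xi$ term is dominated by $-\frac1\lambda u^\lambda e^\xi\varphi_R^p\,\partial_t\xi$, so the sum of these two terms is nonpositive. We treat the cut-off term the same way: it is bounded by a small multiple of the gradient term plus $C u^{\delta}u^\lambda e^{\xi}|\nabla\varphi_R|^p\le C\|u_0\|_\infty^{\delta}R^{-p}\,u^\lambda$. Here $e^\xi\le1$ because $\xi\le0$.

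It remains to let $R\to\infty$. Since $u\in L^1\cap L^\infty$ uniformly in time by Lemma \ref{monl1} and $\lambda\ge1$, the error term is at most $C(t_2-t_1)R^{-p}\sup_t\|u\|_{L^\lambda}^\lambda\to0$. Monotone convergence on the left-hand side then yields $J(t_2)\le J(t_1)$. I expect the main obstacle to be the justification of the test function: $e^\xi$ is only locally Lipschitz, and $u^{\lambda-1}$ has a time derivative only in the weak sense. This should be handled by the Steklov averaging and time cut-off $\theta_\nu$ of Lemma 2.9 in \cite{Grigoryan2024}, exactly as in the proof of Lemma \ref{Lem1}. The exponent bookkeeping in the Young step is where $\delta\ge0$ is essential.
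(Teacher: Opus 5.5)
Your proposal is correct and follows essentially the paper's route: the paper applies the Caccioppoli inequality of Lemma \ref{Lem1} with $\eta=e^{\xi/p}\varphi$, which packages your test function $u^{\lambda-1}e^{\xi}\varphi^p$ and your Young absorption, and then uses $u^{\delta}\le\|u_0\|_{L^\infty}^{\delta}$, condition (\ref{condfordec}) and an exhaustion of $M$ by balls. The only slip is in the display: since $|\nabla u^q|^{p-1}=q^{p-1}u^{(q-1)(p-1)}|\nabla u|^{p-1}$, the exponents there should be $(q-1)(p-1)+\lambda-2$ and $(q-1)(p-1)+\lambda-1$, not $q(p-1)+\lambda-2$ and $q(p-1)+\lambda-1$; the remainder exponent $\sigma=\lambda+\delta$ you reach after the Young step is nonetheless correct.
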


\begin{proof}
	Since $\xi$ is non-positive and $\sigma\geq pq$, where the latter implies that $\lambda\geq q+1$, we see that the integral in (\ref{defJ}) is finite. 
	Let $\varphi$ be a cut-off function of some open geodesic ball $B^{\prime}$ such that $\varphi$ has compact support in some larger ball $B$. Note that the balls are precompact by the completeness of $M$. Then set $\eta(x, t)=e^{\frac{\xi(x, t)}{p}}\varphi(x)$ so that $$\eta^{p}=e^{\xi}\varphi^{p}.$$
	
	By the Caccioppoli type inequality (\ref{vetacor}), we have for $0\leq t_{1}<t_{2}<\infty$,
	\begin{equation*}
		\left[ \int_{B }u^{\lambda }\eta ^{p}\right] _{t_{1}}^{t_{2}}\leq \int_{Q}\left[ p\eta ^{p-1}\partial _{t}\eta
		+c_{2}||u||_{L^{\infty}}^{\delta}\left\vert \nabla \eta \right\vert ^{p}\right]u^{\lambda },
	\end{equation*} where $Q=B\times [t_{1}, t_{2}]$.
	Noticing that $$|\nabla\eta |^{p}=\left|e^{\frac{\xi}{p}}\nabla \varphi+p^{-1}e^{\frac{\xi}{p}}\varphi\nabla \xi\right|^{p}\leq 2^{p-1}\left(e^{\xi}|\nabla \varphi|^{p}+p^{-p}e^{\xi}\varphi^{p}|\nabla \xi|^{p}\right)$$ and $$p\eta ^{p-1}\partial _{t}\eta=\varphi^{p}e^{\xi}\partial_{t}\xi,$$ we obtain
	\begin{align}\left[ \int_{B }{u^{\lambda }e^{\xi}\varphi^{p}}\right] _{t_{1}}^{t_{2}}\leq C^{\prime}\int_{Q}\left[\varphi^{p}\partial_{t}\xi+||u||_{L^{\infty}}^{\delta}|\nabla\varphi|^{p}+C||u||_{L^{\infty}}^{\delta}\varphi^{p}|\nabla \xi|^{p}\right]e^{\xi}u^{\lambda}.\label{beforecond}\end{align}
	Hence, it follows from (\ref{condfordec}) and Lemma \ref{monl1}, that $$\left[ \int_{B }{u^{\lambda }e^{\xi}\varphi^{p}}\right] _{t_{1}}^{t_{2}}\leq C^{\prime}||u_0||_{L^{\infty}}^{\delta} \int_{Q}u^{\lambda}e^{\xi}|\nabla\varphi |^{p}.$$ Finally, we get, by sending $B\to M$ and using that $\varphi\to 1$ and $|\nabla \varphi|\to 0$ as $B\to M$, $$\left[ \int_{M }{u^{\lambda }e^{\xi}}\right] _{t_{1}}^{t_{2}}\leq 0,$$ which finishes the proof.
\end{proof}

\subsection{Davies-Gaffney type inequality}
\label{daviesty}

For any subset $A$ of $M$ and any $r>0$, set $$A_{r}=\{x\in M :d(x, A)<r\}.$$ In the next lemma, we will also use $$A_{r}^{c}=(A_{r})^{c}=\{x\in M:d(x, A)\geq r\}.$$

\begin{lemma}\label{choicofxi}
	Let $u$ be a non-negative bounded subsolution of (\ref{dtv}) in $M\times [0, \infty)$ with non-negative initial function $u(\cdot, 0)=u_0\in L^{1}(M)\cap L^{\infty}(M)$ and $A\subset M$ be measurable. Suppose that again $\sigma\geq pq$ and set $\lambda=\sigma-\delta$. Then, for all $r, t>0$, \begin{equation}\label{upperintarc} \int_{A_{r}^{c}}u^{\lambda}(\cdot, t)\leq \int_{A^{c}}u_{0}^{\lambda}+\exp\left(-\zeta\left(\frac{r}{(||u_0||_{L^{\infty}}^{\delta}t)^{1/p}}\right)^{\frac{p}{p-1}}\right)\int_{A}u_{0}^{\lambda},\end{equation} where $u_{0}=u(\cdot, 0)$ and $\zeta>0$ depends on $p, q, \sigma$.
\end{lemma}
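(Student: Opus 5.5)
The plan is to apply Lemma \ref{integdec} with a Gaussian-type weight adapted to the $p$-Laplacian, namely
\begin{equation*}
\xi(x,s)=-\alpha\,\frac{\left(r-d(x,A)\right)_{+}^{\frac{p}{p-1}}}{(t_0-s)^{\frac{1}{p-1}}}\,K^{-\frac{1}{p-1}},\qquad K=C\|u_0\|_{L^{\infty}}^{\delta},
\end{equation*}
defined for $s\in[0,t_0)$ with $t_0>t$ fixed. Here $\alpha>0$ is a constant depending only on $p$, and $C$ is the constant of (\ref{condfordec}). This $\xi$ is non-positive and locally Lipschitz in $M\times[0,t_0)$, since $|\nabla d(\cdot,A)|\le 1$. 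It vanishes on $A_r^c$, and at $s=0$ it equals $-\alpha r^{p/(p-1)}(Kt_0)^{-1/(p-1)}$ on $A$.

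The first step is to verify (\ref{condfordec}). Write $\rho=(r-d(x,A))_+$ and $\tau=t_0-s$. Then
\begin{equation*}
\partial_s\xi=-\frac{\alpha}{p-1}\,\rho^{\frac{p}{p-1}}\,\tau^{-\frac{p}{p-1}}K^{-\frac{1}{p-1}},
\qquad
|\nabla\xi|^{p}\le \alpha^{p}\left(\tfrac{p}{p-1}\right)^{p}\rho^{\frac{p}{p-1}}\,\tau^{-\frac{p}{p-1}}K^{-\frac{p}{p-1}}.
\end{equation*}
Hence
\begin{equation*}
\partial_s\xi+K|\nabla\xi|^p\le \rho^{\frac{p}{p-1}}\tau^{-\frac{p}{p-1}}K^{-\frac{1}{p-1}}\left(-\frac{\alpha}{p-1}+\alpha^{p}\left(\tfrac{p}{p-1}\right)^{p}\right),
\end{equation*}
and the bracket is $\le 0$ once $\alpha^{p-1}\le (p-1)^{p-1}p^{-p}$. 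We fix $\alpha$ accordingly.

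There is a technical point: Lemma \ref{integdec} is stated on $[0,\infty)$, while $\xi$ blows up as $s\to t_0$. Its proof, however, only uses the Caccioppoli inequality on $[t_1,t_2]$, so it works verbatim on $[0,t]$ with $t<t_0$. Alternatively, one can extend $\xi$ suitably beyond $t$. If $\|u_0\|_{L^\infty}=0$, the claim is trivial; if $\delta=0$, then $K=C$ and nothing changes. The outcome is that $J(t)\le J(0)$.

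It remains to split the two sides.
\begin{itemize}
\item On the left, $\xi(\cdot,t)=0$ on $A_r^c$ and $e^{\xi}\le1$, so $J(t)\ge\int_{A_r^c}u^\lambda(\cdot,t)$.
\item On the right, split $M=A\cup A^c$. On $A^c$ use $e^{\xi}\le 1$. On $A$ we have $d(x,A)=0$, so
\begin{equation*}
e^{\xi(\cdot,0)}=\exp\left(-\alpha\, r^{p/(p-1)}(Kt_0)^{-1/(p-1)}\right).
\end{equation*}
\end{itemize}
Letting $t_0\downarrow t$ and writing
\begin{equation*}
\alpha\, r^{p/(p-1)}(Kt)^{-1/(p-1)}=\alpha C^{-1/(p-1)}\left(\frac{r}{(\|u_0\|_{L^\infty}^{\delta}t)^{1/p}}\right)^{\frac{p}{p-1}}
\end{equation*}
gives (\ref{upperintarc}) with $\zeta=\alpha C^{-1/(p-1)}$, which depends only on $p,q,\sigma$. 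Since $A_r^c\cap A=\emptyset$ and $r>0$, no boundary issues arise.

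The main obstacle is the calibration in the first step, choosing $\alpha$ so that the Hamilton–Jacobi type inequality (\ref{condfordec}) holds. The other delicate point is justifying the use of Lemma \ref{integdec} with a weight that is singular at $s=t_0$ and only Lipschitz in $x$. The weight is Lipschitz on $[0,t]\times M$, so the argument of Lemma \ref{integdec} applies on that finite interval.
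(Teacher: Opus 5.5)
Your proposal is correct and follows the paper's proof essentially verbatim: apply Lemma \ref{integdec} to a $p$-Gaussian weight that blows up at a time $t_0>t$, use $J(t)\le J(0)$, split $J(0)$ over $A$ and $A^c$, and let $t_0\downarrow t$. The only differences are minor. You use $(r-d(x,A))_+$ where the paper uses $d(x,A_r^c)$; both vanish on $A_r^c$ and are at least $r$ on $A$, and yours has the small advantage that $\xi$ is bounded and globally Lipschitz on $M\times[0,t]$. You also compute the admissible constant explicitly.
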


\begin{proof}
	Fix some $s>t$ and define, for all $x\in M$ and $0\leq \tau< s$, the function $$\xi(x,\tau)=-\zeta\left(\frac{d(x, A_{r}^{c})}{[||u_0||_{L^{\infty}}^{\delta}(s-\tau)]^{1/p}}\right)^{\frac{p}{p-1}}.$$
	For such $\tau$, let us also define $$J(\tau)=\int_{M}u^{\lambda}(\cdot, \tau)e^{\xi(\cdot, \tau)} .$$
	Note that $\xi$ is non-positive and locally Lipschitz in $M\times[0, s)$.
	Using that $|\nabla d(x, A_{r}^{c})|\leq 1$, we get $$\left|\nabla \xi(x, \tau)\right|^{p}\leq C\zeta^{p}||u_0||_{L^{\infty}}^{-\delta p/(p-1)} \frac{d(x, A_{r}^{c})^{\frac{p}{p-1}}}{(s-\tau)^{\frac{p}{p-1}}}.$$
	Since $$\partial_{\tau}\xi=-C\zeta||u_0||_{L^{\infty}}^{-\delta/(p-1)}\frac{d(x, A_{r}^{c})^{\frac{p}{p-1}}}{(s-\tau)^{\frac{p}{p-1}}},$$ we can choose $\zeta>0$ so that the partial derivative $\partial_{\tau}\xi$ satisfies the condition $$\partial_{\tau}\xi+C||u_0||_{L^{\infty}}^{\delta}|\nabla \xi|^{p}\leq 0.$$ Hence, we obtain from Lemma \ref{integdec} that \begin{equation}\label{Jnondeczero}J(t)\leq J(0).\end{equation}
	As $d(x, A_{r}^{c})\geq r$ for all $x\in A$, we have $$\xi(x, 0)\leq -\zeta\left(\frac{r}{(||u_0||_{L^{\infty}}^{\delta}s)^{1/p}}\right)^{\frac{p}{p-1}}~ \textnormal{for all}~x\in A.$$
	Together with the fact that $\xi(x, 0)\leq0$ for all $x\in M$, we therefore obtain that \begin{equation}\label{upperJzero}J(0)=\int_{A^{c}}u_{0}^{\lambda}e^{\xi(\cdot, 0)}+\int_{A}u_{0}^{\lambda}e^{\xi(\cdot, 0)}\leq \int_{A^{c}}u_{0}^{\lambda}+\exp\left(-\zeta\left(\frac{r}{(||u_0||_{L^{\infty}}^{\delta}s)^{1/p}}\right)^{\frac{p}{p-1}}\right)\int_{A}u_{0}^{\lambda}.\end{equation}
	
	Since $\xi(x, t)=0$ for $x\in A_{r}^{c}$, it follows that $$J(t)\geq\int_{A_{r}^{c}}u^{\lambda}(\cdot, t)e^{\xi(\cdot, t)}=\int_{A_{r}^{c}}u^{\lambda}(\cdot, t).$$
	Hence, combining this with (\ref{Jnondeczero}) and (\ref{upperJzero}), we see that $$\int_{A_{r}^{c}}u^{\lambda}(\cdot, t)\leq \int_{A^{c}}u_{0}^{\lambda}+\exp\left(-\zeta\left(\frac{r}{(||u_0||_{L^{\infty}}^{\delta}s)^{1/p}}\right)^{\frac{p}{p-1}}\right)\int_{A}u_{0}^{\lambda}.$$
	Sending now $s\to t+$, we finally obtain (\ref{upperintarc}).
\end{proof}
	
\section{Conservation of mass}\label{seccons}

Here, we again assume that $\delta\geq0$ (cf. (\ref{delta})).

\begin{theorem}\label{expmass}
Assume that, for some $x_0\in M$ and all large enough $r>0$, \begin{equation}\label{expvol}V(x_0, r)\leq \exp\left(C r^{\frac{p}{p-1}}\right),\end{equation} where $C$ is a positive constant.
Let $u$ be a non-negative bounded solution to (\ref{dtv}) in $M\times [0, \infty)$ with non-negative initial function $u(\cdot, 0)=u_0\in L^{1}(M)\cap L^{\infty}(M)$. Then, for all $t>0$, \begin{equation}\label{masscons}\int_M u(x, t)=\int_M u_0(x).\end{equation}
\end{theorem}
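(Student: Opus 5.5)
We follow the scheme of \cite{grigor1986stochastically}. Lemma \ref{monl1} with $\sigma=1$ already gives $\int_M u(\cdot,t)\le\int_M u_0$, so only the reverse inequality is needed. Fix $t>0$, a cut-off $\varphi$ and $R>0$, and test the weak formulation (\ref{defvonweaksolq}) for the solution $u$ with $\psi=\varphi(x)$, which is time-independent. This gives
\begin{equation*}
\int_M u_0\varphi-\int_M u(\cdot,t)\varphi=\int_0^t\int_M |\nabla u^q|^{p-2}\langle\nabla u^q,\nabla\varphi\rangle
\le \int_0^t\int_{M}|\nabla u^q|^{p-1}|\nabla\varphi| .
\end{equation*}
Take $\varphi$ with $\varphi=1$ on $B(x_0,R)$, supported in $B(x_0,2R)$, and $|\nabla\varphi|\le 1/R$. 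Letting $R\to\infty$ along a suitable sequence, it then suffices to show that the gradient term over the annuli $B_k\setminus B_{k-1}$, with radii $R_k=2^k$, tends to zero along a subsequence.

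The gradient term will be estimated by Hölder's inequality together with the Caccioppoli inequality (\ref{vetacor}) with $\sigma=pq$, so that $\lambda=q+1$. The intermediate bound we aim for is
\begin{equation*}
\int_0^t\int_{B_{k}\setminus B_{k-1}}|\nabla u^q|^{p-1}\frac{1}{R_k}\le \frac{C}{R_k}\Big(\int_0^t\!\!\int_{U_k}|\nabla u^q|^p\Big)^{\frac{p-1}{p}}\big(tV(x_0,R_{k+1})\big)^{1/p}.
\end{equation*}
Here $U_k$ is a slightly larger annulus. Applying (\ref{vetacor}) with a cutoff $\eta$ adapted to $U_k$, and using $u^{pq}\le\|u_0\|_\infty^{\delta}u^{q+1}$ (valid since $\delta\ge0$), we get
\begin{equation*}
\int_0^t\!\!\int_{U_k}|\nabla u^q|^p\le C\int_{W_k}u^{q+1}(\cdot,0)+\frac{C\|u_0\|_\infty^{\delta}}{R_k^p}\int_0^t\!\!\int_{W_k}u^{q+1},
\end{equation*}
where $W_k$ is a still larger annulus. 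The aim is to show that $\int_{W_k}u^{q+1}(\cdot,\tau)$ decays like $\exp(-cR_k^{p/(p-1)}/t^{1/(p-1)})$, which beats the volume factor coming from (\ref{expvol}).

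The decay comes from the Davies–Gaffney inequality, Lemma \ref{choicofxi}, applied with $\sigma=pq$ and $\lambda=q+1$. Take $A=B(x_0,\rho_k)$ with $\rho_k=\epsilon R_k$ and $r=R_{k-2}-\rho_k$. Then
\begin{equation*}
\int_{W_k}u^{q+1}(\cdot,\tau)\le\int_{B(x_0,\epsilon R_k)^c}u_0^{q+1}+e^{-\zeta (cR_k)^{p/(p-1)}/(\|u_0\|^\delta_\infty t)^{1/(p-1)}}\int_M u_0^{q+1}.
\end{equation*}
The second term wins against $V(x_0,R_{k+1})\le \exp(C2^{p/(p-1)}R_k^{p/(p-1)})$ only when $t$ is small, namely $t\le t_0$ with $t_0$ depending on $C,\zeta,\|u_0\|_\infty$. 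We therefore prove conservation on $[0,t_0]$ and iterate: $u(\cdot,t_0)$ is again in $L^1\cap L^\infty$ with $\|u(\cdot,t_0)\|_\infty\le\|u_0\|_\infty$ by Lemma \ref{monl1}, so the same $t_0$ works at every step.

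\textbf{Main obstacle.} The tail term $\int_{B(\epsilon R_k)^c}u_0^{q+1}$ has no reason to decay faster than $V^{-1}$. To handle it, first reduce to compactly supported $u_0$. This needs either an $L^1$-contraction or comparison principle for solutions, or the following alternative: split the initial mass and use that the whole tail contributes at most $\int_{B(\rho)^c}u_0$ to the mass defect, via monotonicity of $L^1$ norms over sets. A cleaner route I would try first avoids the reduction. Instead of bounding the gradient term, show directly that
\begin{equation*}
\int_{B(x_0,R)}u(\cdot,t)\ge\int_{B(x_0,\rho)}u_0-o(1)
\end{equation*}
by testing with $\varphi$ on the annulus, and pass from $L^{q+1}$ to $L^1$ bounds through $u\le\|u_0\|_\infty$. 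If neither works, I would assume compact support, with $A$ equal to the support of $u_0$ so that $\int_{A^c}u_0^{q+1}=0$, and then recover general $u_0$ by approximation, provided a stability result for solutions is available.
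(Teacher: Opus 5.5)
Your main line is the paper's proof: for compactly supported $u_0$, Lemma \ref{choicofxi} and (\ref{vetacor}) with $\sigma=pq$, combined with H\"older and (\ref{expvol}), give conservation for $t<t_0=c_0\|u_0\|_{L^\infty}^{-\delta}$; one then passes to general $u_0$ through $u_{0,n}=u_0 1_{B(x_0,n)}$ and iterates in time with Lemma \ref{monl1}. The step you leave conditional is exactly what the paper imports, namely existence of the $u_n$ and the $L^1$-contraction $\int_M|u_n-u|(\cdot,t)\le\int_M|u_{0,n}-u_0|$ from the comparison principle (Lemma 2.5 of \cite{surig2026existence}), so commit to that route (your other two options cannot bypass it: the equation is nonlinear, so the tail of $u_0$ cannot be split off, and the tail terms in (\ref{vetacor}) and (\ref{upperintarc}) have no decay rate to beat $V(x_0,2r)^{1/p}$), and do the approximation before iterating, since $u(\cdot,t_0)$ is in general no longer compactly supported.
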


\begin{proof}
Let us first assume that $u_0$ has a compact support, say $\textnormal{supp}~u_0\subset B(x_0, r_0)$. Choose $r>4r_0$. Then Lemma \ref{daviesty} with $\sigma=pq$ yields, for all $s>0$, \begin{equation}\label{DGI}
\int_{B(x_0, r/2)^{c}}u^{q+1}(\cdot, s)\leq  \exp\left(-c\left(\frac{r}{(||u_0||_{L^{\infty}}^{\delta}s)^{1/p}}\right)^{\frac{p}{p-1}}\right)\int_{B(x_0, r_0)}u_0^{q+1}.
\end{equation}
Define $A_r=B(x_0, 2r)\setminus B(x_0, r)$ and let $\eta_r$ be a cut-off function of $A_r$ in $B(x_0, 3r)\setminus B(x_0, r/2)$, so that $$|\nabla \eta_r|\leq \frac{C}{r}.$$ Since $r>4r_0$, we see that $u_0=0$ in $\textnormal{supp}~\eta_r$.
It follows from (\ref{vetacor}) that, for all $t>0$, $$\int_{0}^{t}\int_{A_r}\left\vert \nabla  u^{q }  \right\vert
^{p}\leq \frac{C}{r^{p}}\int_{0}^{t}\int_{\textnormal{supp}~\eta_r}u^{pq}\leq \frac{C}{r^{p}} ||u_0||_{L^{\infty}}^{\delta}\int_{0}^{t}\int_{\textnormal{supp}~\eta_r}u^{q+1}.$$ Hence, combining this with (\ref{DGI}), we obtain \begin{equation}\label{gradest}\int_{0}^{t}\int_{A_r}\left\vert \nabla  u^{q }  \right\vert^{p}\leq \frac{Ct}{r^{p}}||u_0||_{L^{\infty}}^{\delta}\int_{B(x_0, r_0)}u_0^{q+1}\exp\left(-c\left(\frac{r}{(||u_0||_{L^{\infty}}^{\delta}t)^{1/p}}\right)^{\frac{p}{p-1}}\right)\end{equation}

Now let us take another cut-off function $\psi_r$ of $B(x_0, r)$ in $B(x_0, 2r)$. Then testing in (\ref{defvonweaksolq}) with $\psi_r$ we get, for all $t>0$, $$\left[\int_{B(x_0, 2r)}{u\psi_r }\right]_{0}^{t}=-\int_{0}^{t}{\int_{A_r}{|\nabla u^{q}|^{p-2}\langle\nabla u^{q},\nabla \psi_r\rangle}}.$$ Setting $$I_r(t)=\left|\int_{B(x_0, 2r)}{u(\cdot, t)\psi_r} -\int_{B(x_0, 2r)}u_0\psi_r \right|,$$ we obtain from Hölder's inequality $$I_r(t)\leq \frac{C}{r}\left(\int_{0}^{t}\int_{A_r}\left\vert \nabla  u^{q }  \right\vert^{p}\right)^{(p-1)/p}\left(t\mu(A_r)\right)^{1/p}.$$
Hence, by (\ref{gradest}), $$I_r(t)\leq \frac{Ct}{r^{p}}||u_0||_{L^{\infty}}^{\delta(p-1)/p}||u_0||_{L^{q+1}}^{(q+1)(p-1)/p}V(x_0, 2r)^{1/p}\exp\left(-c\left(\frac{r}{(||u_0||_{L^{\infty}}^{\delta}t)^{1/p}}\right)^{\frac{p}{p-1}}\right).$$
Thus, we obtain from the volume bound (\ref{expvol}), $$ I_r(t)\leq \frac{Ct}{r^{p}}||u_0||_{L^{\infty}}^{\delta(p-1)/p}||u_0||_{L^{q+1}}^{(q+1)(p-1)/p}\exp\left(-\left[\frac{c}{(||u_0||_{L^{\infty}}^{\delta}t)^{1/(p-1)}}-c^{\prime}\right]r^{\frac{p}{p-1}}\right).$$
Now let us choose $t_0=c_0 ||u_0||_{L^{\infty}}^{-\delta}$, where $c_0$ is sufficiently small. Thus, for every $t< t_0$, we have $I_r(t)\to 0$ for $r\to \infty$, that means, for every $t< t_0$, \begin{equation}\label{forsmallt}\int_{M}u(\cdot, t)=\int_{M}u_0.\end{equation}

Let now $u_0\in L^{1}(M)\cap L^{\infty}(M)$ be arbitrary, that is, it might have unbounded support. Then let us define $u_{0, n}=u_0 1_{B(x_0, n)}$ and let $u_n$ be a corresponding solution. Then, by (\ref{forsmallt}), for every $n$ and every $t< t_0$, $$\int_{M}u_n(\cdot, t)=\int_{M}u_{0, n}.$$ It follows from the comparison principle Lemma 2.5 in \cite{surig2026existence}, that $$\int_{M}|u_n(\cdot, t)-u(\cdot, t)|\leq \int_{M}|u_{0, n}-u_0|\to 0.$$ Therefore, $$\int_{M}u(\cdot, t)=\lim_{n\to \infty}\int_{M}u_n(\cdot, t)=\lim_{n\to \infty}\int_{M}u_{0, n}=\int_{M}u_0,$$ which proves (\ref{forsmallt}) also in this case.

Let us now extend this to all $t>0$. By Lemma \ref{monl1} we have $u(\cdot, t_0)\in L^{1}(M)\cap L^{\infty}(M)$. Since $c_0 ||u(t_0)||_{L^{\infty}}^{-\delta}\geq c_0 ||u_0||_{L^{\infty}}^{-\delta}=t_0$, it follows from (\ref{forsmallt}) that, for all $t< t_0$, $$\int_{M}u(\cdot, t+t_0)=\int_{M}u(\cdot, t_0).$$ Hence, for all $t<2t_0$, $$\int_{M}u(\cdot, t)=\int_{M}u_0.$$ Iterating this argument we obtain (\ref{forsmallt}) for all $t>0$, which finishes the proof.
\end{proof}

\begin{conjecture}
The statement of Theorem \ref{expmass} holds under the assumption that, for some $x_0\in M$ and all large enough $r>0$, \begin{equation}\label{conjecture}\int^{\infty}\frac{r^{p-1}dr}{(\log V(x_0, r))^{p-1}}=\infty.\end{equation}	
\end{conjecture}

Clearly, (\ref{expvol}) implies (\ref{conjecture}).

\subsection{Degenerate case}

Let us assume that $\delta>0$, that is, $q(p-1)>1$.
	
\begin{proposition}\label{propfps}
Assume that $M$ has non-negative Ricci curvature. Let $u$ be a non-negative bounded solution to (\ref{dtv}) in $M\times [0, \infty)$ with non-negative initial function $u(\cdot, 0)=u_0\in L^{1}(M)\cap L^{\infty}(M)$. Then, for all $t>0$, \begin{equation}\label{massconsdeg}\int_M u(x, t)=\int_M u_0(x).\end{equation}
\end{proposition}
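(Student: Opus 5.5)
The plan is to use the finite propagation speed property from \cite{grigor2024finite}, valid in the degenerate range $\delta>0$. Non-negative Ricci curvature enters only through this property: on such manifolds one has the relative Faber--Krahn (or Sobolev) inequality needed there. The precise statement I intend to invoke is the following. If $u_0$ is supported in a ball $B(x_0,r_0)$, then for every $t>0$ the solution $u(\cdot,t)$ is supported in $B(x_0,r_0+R(t))$, where $R(t)<\infty$ depends on $p,q,t$, on $\|u_0\|_{L^1}$ (or $\|u_0\|_{L^\infty}$), and on the geometry. This is where $\delta>0$ is essential; for $\delta=0$ there is no finite propagation.

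First assume $\operatorname{supp}u_0\subset B(x_0,r_0)$ and fix $t>0$. Choose $r>r_0+R(t)$, so that $u(\cdot,s)\equiv0$ outside $B(x_0,r)$ for all $s\in[0,t]$. I use that $R$ is non-decreasing in $s$, which holds for the explicit bound in \cite{grigor2024finite}. Let $\psi_r$ be a cut-off function of $B(x_0,r)$ in $B(x_0,2r)$, exactly as in the proof of Theorem \ref{expmass}. Testing the weak formulation (\ref{defvonweaksolq}) with $\psi_r$, as a solution so that equality holds, gives
$$\left[\int_{M}u\psi_r\right]_0^t=-\int_0^t\int_{B(x_0,2r)\setminus B(x_0,r)}|\nabla u^q|^{p-2}\langle\nabla u^q,\nabla\psi_r\rangle .$$
On the annulus $u^q\equiv0$, hence $\nabla u^q=0$ almost everywhere there, and the right-hand side vanishes. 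Since $\psi_r=1$ on the support of $u(\cdot,s)$, we obtain $\int_M u(\cdot,t)=\int_M u_0$.

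For general $u_0\in L^1\cap L^\infty$, I truncate as in the proof of Theorem \ref{expmass}. Put $u_{0,n}=u_01_{B(x_0,n)}$ and let $u_n$ be the corresponding solutions. Then $u_n$ conserves mass by the previous step. The $L^1$ contraction from the comparison principle (Lemma 2.5 in \cite{surig2026existence}) gives $\int_M|u_n(\cdot,t)-u(\cdot,t)|\le\int_M|u_{0,n}-u_0|\to0$. Passing to the limit yields (\ref{massconsdeg}) for all $t>0$. No iteration in time is needed, since the first step already works for every $t$.

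The main obstacle is to make sure the finite propagation speed result from \cite{grigor2024finite} applies in the present generality: on a manifold with $\mathrm{Ric}\ge0$, and for bounded weak solutions in the sense used here, with a finite radius bound $R(t)$ for every $t$. If that paper states the result under a relative Faber--Krahn inequality, I would check this hypothesis via Buser's inequality and volume doubling, both of which follow from $\mathrm{Ric}\ge0$.
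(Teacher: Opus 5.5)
Your proposal is correct and follows the paper's proof essentially verbatim. For compactly supported $u_0$ you use finite propagation speed from \cite{grigor2024finite}, test the weak formulation with a time-independent cut-off equal to $1$ on a set containing $\textnormal{supp}\,u(\cdot,s)$ for all $s\le t$ so that the flux term vanishes, and then pass to general $u_0$ via $u_{0,n}=u_0 1_{B(x_0,n)}$ and the $L^1$-contraction of Lemma 2.5 in \cite{surig2026existence}. Your two side remarks are also accurate: monotonicity of the propagation radius in time is used (the paper uses it implicitly when choosing $K(t)$), and no iteration in time is needed here, unlike in Theorem \ref{expmass}.
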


\begin{proof}
Let us first assume that $u_0$ has a compact support. Then it is well-known (cf. \cite{grigor2024finite}) that $u$ has a finite propagation speed, that is, for every $t>0$, $$\textnormal{supp}~u(\cdot, t)\subset K(t),$$ where $K(t)$ is a precompact neighborhood of $\textnormal{supp}~u_0$ depending on $t$.
Let us now choose $\psi$ to be a cut-off function of $K(t)$ in $M$.
Testing in (\ref{defvonweaksolq}) with such $\psi$ we get, for all $t>0$, $$\left[\int_{M}{u\psi }\right]_{0}^{t}=-\int_{0}^{t}{\int_{M}{|\nabla u^{q}|^{p-2}\langle\nabla u^{q},\nabla \psi\rangle}}.$$
Since $\nabla \psi=0$ in $K(t)$ and $u(\cdot, s)=0$ in $M\setminus K(t)$, we get $$\int_{M}{u(\cdot, t)\psi }=\int_{M}{u_0\psi }.$$ 
Using again that $\psi=1$ on $K(t)$, we also obtain, for all $t>0$, $$\int_{M}{u(\cdot, t) }=\int_{M}{u_0}.$$ Using the same arguments as in the proof of Theorem \ref{expmass} we can extend this so that (\ref{massconsdeg}) holds for arbitrary $u_0\in L^{1}(M)\cap L^{\infty}(M)$ and all $t>0$. 
\end{proof}


\subsection{Singular case}

Let us assume that $D:=-\delta=1-q(p-1)>0$, that is, $q(p-1)<1$.

\begin{theorem}\label{expmasssing}
	Assume that, for some $x_0\in M$ and all large enough $r>0$, \begin{equation}\label{expvolsing}V(x_0, r)\leq Cr^{N},\end{equation} where $C, N$ are positive constants.
	Suppose that \begin{equation}\label{pnD}p>ND\end{equation} and let $u$ be a non-negative bounded solution to (\ref{dtv}) in $M\times [0, \infty)$ with non-negative initial function $u(\cdot, 0)=u_0\in L^{1}(M)\cap L^{\infty}(M)$. Then, for all $t>0$, \begin{equation}\label{massconssing}\int_M u(x, t)=\int_M u_0(x).\end{equation}
\end{theorem}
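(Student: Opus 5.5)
The plan is to test the equation with a spatial cut-off, as in the proof of Theorem \ref{expmass}. The flux through the annulus is then controlled by the Caccioppoli inequality (\ref{vetacor}) with an exponent $\sigma$ slightly larger than $q(p-1)=1-D$. The remaining integrals are bounded by the mass via Jensen's inequality on the annulus, where the polynomial volume bound (\ref{expvolsing}) enters. The Davies--Gaffney estimate is not available here, since $\delta<0$; I will rely only on the finiteness of the mass.

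\emph{Step 1 (flux identity).} First assume $\operatorname{supp}u_0\subset B(x_0,r_0)$ and take $r>4r_0$. Let $\psi_r$ be a cut-off of $B(x_0,r)$ in $B(x_0,2r)$ with $|\nabla\psi_r|\le C/r$, and set $A_r=B(x_0,2r)\setminus B(x_0,r)$. Testing (\ref{defvonweaksolq}) with $\psi_r$ gives
$$I_r(t)\le \frac{C}{r}\int_0^t\int_{A_r}|\nabla u^q|^{p-1}.$$

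\emph{Step 2 (weighted H\"older).} Fix $\sigma\in(q(p-1),1]$, so that $\lambda=\sigma+D>1$. On $\{u>0\}$ write $|\nabla u^q|=c\,u^{q-\sigma/p}|\nabla u^{\sigma/p}|$. H\"older's inequality then gives
$$\int_0^t\!\!\int_{A_r}|\nabla u^q|^{p-1}\le C\Big(\int_0^t\!\!\int_{A_r}|\nabla u^{\sigma/p}|^p\Big)^{\frac{p-1}{p}}\Big(\int_0^t\!\!\int_{A_r}u^{s}\Big)^{\frac1p},\qquad s=(pq-\sigma)(p-1).$$
Let $\eta_r$ be a cut-off of $A_r$ in $B(x_0,3r)\setminus B(x_0,r/2)$. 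Since $u_0=0$ on $\operatorname{supp}\eta_r$, the inequality (\ref{vetacor}) yields
$$\int_0^t\int_{A_r}|\nabla u^{\sigma/p}|^p\le Cr^{-p}\int_0^t\int_{\operatorname{supp}\eta_r}u^\sigma .$$

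\emph{Step 3 (Jensen with polynomial volume).} For $0<\theta\le1$ and any ball-type set $E\subset B(x_0,3r)$, Jensen's inequality gives $\int_E u^\theta\le \|u\|_{L^1}^\theta\mu(E)^{1-\theta}$. By Lemma \ref{monl1}, $\|u(\cdot,\tau)\|_{L^1}\le\|u_0\|_{L^1}$, and (\ref{expvolsing}) gives $\mu(E)\le Cr^N$. Both $\sigma\le1$ and $s<1$ hold for $\sigma$ near $1-D$, because $s\to (q-(1-D)/p\cdot p/p)\ldots$; precisely, $s\to(pq-1+D)(p-1)=q(p-1)<1$. Combining the three steps,
$$I_r(t)\le C(t,\|u_0\|_{L^1})\, r^{-1-(p-1)+\frac{N}{p}[(1-\sigma)(p-1)+(1-s)]}.$$
As $\sigma\downarrow 1-D$ the bracket tends to $D(p-1)+D=pD$. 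The exponent therefore tends to $-p+ND<0$ by (\ref{pnD}), so we fix $\sigma$ close enough to $1-D$ that the exponent is still negative. Then $I_r(t)\to0$ as $r\to\infty$ for every $t>0$. Since $u(t),u_0\in L^1$, this gives (\ref{massconssing}) without any restriction to small time.

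\emph{Step 4 (general data).} For $u_0\in L^1\cap L^\infty$ with unbounded support, I approximate by $u_{0,n}=u_01_{B(x_0,n)}$. I then pass to the limit using the $L^1$-contraction comparison principle, exactly as in the proof of Theorem \ref{expmass}.

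The main obstacle is the exponent bookkeeping in Steps 2--3. One must check that a single $\sigma$ satisfies simultaneously $\sigma>q(p-1)$ (so that Lemma \ref{Lem1} applies), $\sigma\le1$ and $s\le1$ (so that Jensen applies with the $L^1$ norm), and the strict negativity of the final power of $r$. The strict inequality $p>ND$ provides exactly the slack needed to move $\sigma$ off the endpoint $1-D$. A secondary technical point is justifying the weighted H\"older step where $u=0$. I would handle it by working with $\max(u,\varepsilon)$, or with the regularisations $f_\varepsilon,g_\varepsilon$ from the proof of Lemma \ref{Lem1}, and letting $\varepsilon\to0$.
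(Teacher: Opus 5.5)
Your proposal is correct and follows essentially the same route as the paper: the flux identity with $\psi_r$, Lemma \ref{Lem1} with $\sigma$ just above $q(p-1)$ (the paper takes $\sigma=q(p-1)+\varepsilon$ with $0<\varepsilon<\min(D,q)$), the H\"older splitting of $|\nabla u^q|^{p-1}$ into $|\nabla u^{\sigma/p}|^{p}$ and $u^{(pq-\sigma)(p-1)}$, H\"older against the mass together with the polynomial volume bound, and the same approximation argument for general data. The only cosmetic difference is that the limit $\sigma\downarrow 1-D$ is not needed for the power of $r$: one has $(1-\sigma)(p-1)+(1-s)=p\bigl(1-q(p-1)\bigr)=pD$ identically, so the exponent is exactly $-p+ND$ for every admissible $\sigma$.
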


\begin{proof}
Let us first assume that $u_0$ has a compact support. For $0<\varepsilon<\min(D, q)$ let $\sigma=1+\varepsilon-D>q(p-1)$, and set $\lambda=\sigma+D=1+\varepsilon$. Let $\eta_r$ be a cut-off function of $A_r=B(x_0, 2r)\setminus B(x_0, r)$ in $B(x_0, 3r)\setminus B(x_0, r/2)$, so that $$|\nabla \eta_r|\leq \frac{C}{r}.$$ Choosing $r$ large enough we have that the support of $\eta_r$ is disjoint from the support of $u_0$ and we obtain from (\ref{vetacor}) $$\int_{0}^{t}\int_{A_r}\left\vert \nabla  u^{\sigma/p }  \right\vert
^{p}\leq \frac{C}{r^{p}}\int_{0}^{t}\int_{B(x_0, 4r)}u^{\sigma}.$$
As $0<\sigma<1$, we obtain by Hölder's inequality and Lemma \ref{monl1} $$\int_{B(x_0, 4r)}u^{\sigma}\leq \left(\int_{B(x_0, 4r)}u\right)^{\sigma}V(x_0, 4r)^{1-\sigma}\leq ||u_0||_{L^1}^{\sigma}V(x_0, 4r)^{1-\sigma}.$$
Applying (\ref{expvolsing}) we therefore get \begin{equation}\label{intsigmap}\int_{0}^{t}\int_{A_r}\left\vert \nabla  u^{\sigma/p }  \right\vert
^{p}\leq Ct||u_0||_{L^1}^{\sigma} r^{-p+N(1-\sigma)}.\end{equation}
Since $\nabla  u^{\sigma/p } =\frac{\sigma}{pq}u^{\sigma/p-q}\nabla u^{q}$ we have $$|\nabla u^{q}|^{p-1}=Cu^{(q-\sigma/p)(p-1)}|\nabla u^{\sigma/p}|^{p-1}.$$ Hence, again using Hölder's inequality \begin{equation}\label{substi}\int_{0}^{t}\int_{A_r}\left\vert \nabla  u^{q }  \right\vert^{p-1}\leq C\left(\int_{0}^{t}\int_{A_r}\left\vert \nabla  u^{\sigma/p }  \right\vert
^{p}\right)^{(p-1)/p}\left(\int_{0}^{t}\int_{A_r}u^{a}\right)^{1/p},\end{equation} where $$a=(pq-\sigma)(p-1)=(q-\varepsilon)(p-1).$$ We have $0<a<1$ as $a\to 1-D$ for $\varepsilon\to0$. Thus, again by Hölder's inequality and (\ref{expvolsing}) \begin{equation}\label{intaa}\int_{A_r}u^{a}\leq  ||u_0||_{L^1}^{a}V(x_0, 2r)^{1-a}\leq C||u_0||_{L^1}^{a}r^{N(1-a)}.\end{equation}
Thus, substituting (\ref{intsigmap}) and (\ref{intaa}) into (\ref{substi}), we get $$\int_{0}^{t}\int_{A_r}\left\vert \nabla  u^{q }  \right\vert^{p-1}\leq Ct ||u_0||_{L^1}^{[\sigma(p-1)+a]/p}r^{\frac{p-1}{p}[-p+N(1-\sigma)]+\frac{N}{p}(1-a)}.$$
Observe that $$[\sigma(p-1)+a]/p=1-D$$ and $$\frac{p-1}{p}[-p+N(1-\sigma)]+\frac{N}{p}(1-a)=1-p+ND.$$
Hence, \begin{equation}\label{gradestimate}\int_{0}^{t}\int_{A_r}\left\vert \nabla  u^{q }  \right\vert^{p-1}\leq Ct ||u_0||_{L^1}^{1-D}r^{1-p+ND}.\end{equation}

Now let us take another cut-off function $\psi_r$ of $B(x_0, r)$ in $B(x_0, 2r)$. Then testing in (\ref{defvonweaksolq}) with $\psi_r$ we get, for all $t>0$, $$\left[\int_{B(x_0, 2r)}{u\psi_r }\right]_{0}^{t}=-\int_{0}^{t}{\int_{A_r}{|\nabla u^{q}|^{p-2}\langle\nabla u^{q},\nabla \psi_r\rangle}}.$$ Hence,  $$\left|\int_{B(x_0, 2r)}{u(\cdot, t)\psi_r} -\int_{B(x_0, 2r)}u_0\psi_r \right|\leq \frac{C}{r}\int_{0}^{t}\int_{A_r}\left\vert \nabla  u^{q }  \right\vert^{p-1}.$$ Therefore, by (\ref{gradestimate}), $$\left|\int_{B(x_0, 2r)}{u(\cdot, t)\psi_r} -\int_{B(x_0, 2r)}u_0\psi_r \right|\leq Ct ||u_0||_{L^1}^{1-D}r^{-p+ND}.$$ By (\ref{pnD}) we can send $r\to \infty$ and obtain (\ref{massconssing}) in the case when $u_0$ has a compact support.
Finally, using the same method as in the proof of Theorem \ref{expmass}, we obtain (\ref{massconssing}) for arbitrary $u_0\in L^{1}(M)\cap L^{\infty}(M)$.
\end{proof}

\begin{example}
Since $\mathbb{R}^{n}$ satisfies (\ref{expvolsing}) with $N=n$, we obtain from Theorem \ref{expmasssing} mass conservation in the case $p>nD$.
\end{example}

\subsection{$L^{p-1}$-Liouville property}

\begin{proposition}\label{propLiou}
	Assume that $M$ is connected and that (\ref{expvol}) holds. Then every non-negative lower semi-continuous function $v\in L^{p-1}(M)$, with $v\in W_{loc}^{1, p}(M)$ satisfying $-\Delta_pv\geq 0$ weakly in $M$, is constant.
\end{proposition}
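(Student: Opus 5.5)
Let $v\ge0$ be lower semi-continuous, $v\in L^{p-1}(M)\cap W^{1,p}_{loc}(M)$, with $-\Delta_p v\ge 0$ weakly, and suppose $v$ is not constant. The idea is to use the Trudinger case $q(p-1)=1$, i.e. $q=\frac1{p-1}$, $\delta=0$, of Theorem \ref{expmass}. If $w$ is $p$-superharmonic and we set $u=w^{p-1}$, then $u^{q}=w$. Hence the time-independent function $u(x,t)=v(x)^{p-1}$ satisfies $\partial_t u=0\ge \Delta_p u^q$; it is a stationary supersolution of the Trudinger equation. It lies in $L^1(M)$ because $v\in L^{p-1}$. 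The plan is to play this stationary supersolution against the mass conservation for genuine solutions given by Theorem \ref{expmass}, which holds under (\ref{expvol}).

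\textbf{Step 1 (reduction to bounded functions).} Since $v$ may be unbounded, replace it by $v_k=\min(v,k)$. The minimum of two $p$-superharmonic functions is $p$-superharmonic, $v_k\in L^{p-1}\cap L^\infty\cap W^{1,p}_{loc}$, and $v_k$ is non-constant for large $k$. So we may assume $v$ bounded.

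\textbf{Step 2 (strictness somewhere).} Since $v$ is a non-constant $p$-superharmonic function on a connected $M$, the strong minimum principle lets us pick a small ball $B$ and a smaller level. Concretely, take $w=\min(v,m)$ with $m$ chosen so that $w$ is non-constant. Then $-\Delta_p w$ is a nonzero non-negative Radon measure, because otherwise $w$ would be $p$-harmonic, hence continuous, non-constant and attaining an interior maximum $m$ on an open set, which is impossible. So we may assume the measure $\nu=-\Delta_p v\ge0$ is nonzero.

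\textbf{Step 3 (comparison and contradiction).} Let $u$ be the bounded solution of $\partial_t u=\Delta_p u^{1/(p-1)}$ with $u_0=v^{p-1}\in L^1\cap L^\infty$, which exists by \cite{surig2026existence}. Since $v^{p-1}$ is a supersolution with the same initial data, the comparison principle (Lemma 2.5 of \cite{surig2026existence}) gives $u(\cdot,t)\le v^{p-1}$ for all $t$. Theorem \ref{expmass} gives $\int_M u(\cdot,t)=\int_M v^{p-1}$, and therefore $u(\cdot,t)=v^{p-1}$ a.e. for all $t$. Thus $v^{p-1}$ is itself a weak solution, i.e. $\Delta_p v=0$ weakly, which contradicts $\nu\ne0$.

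The main obstacle is Step 3's comparison between a solution and a merely weak (lower semi-continuous, $W^{1,p}_{loc}$) stationary supersolution on a noncompact manifold. If the cited comparison principle is stated only for solutions, the first thing to try is constructing $u$ as a monotone limit of Dirichlet problems on an exhaustion $\Omega_j$, with data $v^{p-1}$ and zero boundary values. On bounded domains comparison with $v^{p-1}$ is standard, since $v^{p-1}\ge0$ on $\partial\Omega_j$. The limit solution then satisfies $u\le v^{p-1}$, and the integrability needed for the identification in Step 3 holds automatically.
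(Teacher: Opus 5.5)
Your proposal is correct and follows essentially the same route as the paper. The paper also truncates to $v_k=\min(v,k)$, compares the Trudinger solution with initial datum $v_k^{p-1}$ against the stationary supersolution $v_k^{p-1}$ via Lemma 2.5 of \cite{surig2026existence}, uses Theorem \ref{expmass} to force equality so that $v_k$ is $p$-harmonic, and then contradicts the strong maximum principle on the open set $\{v>k\}$. The differences are cosmetic: you run the maximum-principle step before the comparison rather than after, and your level $m$ must be chosen strictly between two values of $v$ (as the paper does with $k$), not merely so that $\min(v,m)$ is non-constant, so that $\{v>m\}$ is a nonempty open set on which the maximum is attained.
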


\begin{proof}
For $k>0$, let us define $v_k=\min(v, k)$ and $w_k=v_k^{p-1}$. Then $v_k\in W_{loc}^{1, p}(M)$ and $-\Delta_pv_k\geq 0$ weakly in $M$. Also, $w_k\leq k^{p-1}$ and $$\int_M w_k\leq \int_M v^{p-1}<\infty,$$ so that $w_k\in L^{1}(M)\cap L^{\infty}(M)$. Let $u_k=u_k(x, t)$ be a weak solution of the Trudinger equation $\partial _{t}u=\Delta _{p}u^{\frac{1}{p-1}}$ with initial function $w_k$. Then it follows from Theorem \ref{expmass} that, for all $t>0$, \begin{equation}\label{appconofmass}\int_M u_k(\cdot, t)=\int_M w_k.\end{equation}
Since $w_k^{1/(p-1)}=v_k$, we have  $-\Delta_pw_k^{1/(p-1)}\geq 0$ weakly in $M$, and we see that the time-independent function $w_k$ is a weak supersolution of the Trudinger equation. It then follows from the comparison principle Lemma 2.5 in \cite{surig2026existence}, that $u_k(\cdot, t)\leq w_k$ a.e. in $M$.
Together with (\ref{appconofmass}) this yields $u_k(\cdot, t)=w_k$ a.e. in $M$. Hence, by the solution analogue of (\ref{defvonweaksolq}) with test function $\psi(x, t)=\varphi(x)\eta(t)$, where $\varphi\in C_0^{\infty}(M)$ and $\eta$ is a smooth cut-off function in $[t_1, t_2]$, $t_1<t_2$, we get $$\int_{M}{|\nabla v_k|^{p-2}\langle\nabla v_k,\nabla \varphi\rangle}= 0,$$ that is, $-\Delta_pv_k= 0$ weakly in $M$.
Assume now that $v$ is non-constant. Let $k>0$ be such that there exist $x, y\in M$ with $v(x)<k<v(y)$. Since $v$ is lower semi-continuous, the set $\{v>k\}$ is open and non-empty and on this set $v_k=k$. Hence, the $p$-harmonic function $v_k$ attains its maximum on an non-empty open set. The maximum principle for $p$-harmonic functions (cf. \cite{8b811f47b8724a088f74f7843726f67f}) then gives $v_k\equiv k$ on $M$. However, we have $v_k(x)=v(x)<k$ which gives a contradiction.
\end{proof}

\begin{remark}\label{remholo}
It is conjectured by I. Holopainen \cite{holopainen2000sharp} that (\ref{conjecture}) implies the $L^{p-1}$-Liouville-property of Proposition \ref{propLiou} (see also \cite{pigola2006some}). Hence, our result Proposition \ref{propLiou} gives a partial answer to that conjecture. In the linear case $p=2$, the conjecture was proved by A. Grigor'yan \cite{grigor1989stochastically}. 
\end{remark}

\section{Finite extinction time}\label{secextinct}

\begin{definition}
We say that a function $u=u(x, t)$ has a \textit{finite extinction time} in $M$ if there exists $T>0$ such that $$u(\cdot, t)\equiv0\quad \text{for all}~t\geq T.$$	
\end{definition}

Clearly, if $u$ has a finite extinction time, $u$ cannot have the conservation of mass property.

In this Section we always assume that $D>0$ holds, that is, $q(p-1)<1$.

\begin{proposition}\label{thmfinexint}
	Suppose that the manifold $M$ admits the Sobolev inequality \begin{equation}\label{Sobolevinequality}\left(\int_{M}{|f|^{p\kappa} d\mu}\right)^{1/\kappa}\leq C\int_{M}{|\nabla f|^{p}d\mu}\quad \forall~f\in W^{1, p}(M),\end{equation}  with some Sobolev exponent $\kappa>1$. Let $\rho$ be a positive function on $M$ and let $u$ be a non-negative bounded weak solution of $\rho\partial _{t}u=\Delta _{p}u^{q}$ in $M\times [0, \infty)$, with initial function $u(\cdot, 0)=u_{0}\in L^{\zeta}(M, \rho d\mu)\cap L^{\infty}(M)$ for some $\zeta\geq1$. If $\rho$ is bounded and satisfies \begin{equation}\label{finitecondint}\left|\left|\rho\right|\right|_{L^{\theta}(M, \mu)}<\infty,\end{equation} where \begin{equation}\label{thetamax}\theta=\left\{\begin{array}{ll}
			\frac{\kappa}{\kappa-1-\frac{D}{\zeta-D}}, & \text{if }\kappa\geq \frac{\zeta}{\zeta-D}~\textnormal{and}~\zeta> 1, \\ 
			\infty, & \text{if }\kappa< \frac{\zeta}{\zeta-D}.%
		\end{array}%
		\right.\end{equation} Then $u$ has a finite extinction time in $M$.
\end{proposition}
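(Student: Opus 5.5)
We use the classical energy method for fast diffusion: derive a differential inequality for a weighted $L^{\zeta}$-norm of $u$ that forces it to vanish in finite time. Set
\[
y(t)=\int_M u^{\zeta}(\cdot,t)\,\rho\,d\mu .
\]
First test the equation $\rho\partial_t u=\Delta_p u^q$ with $u^{\zeta-1}$, or with a regularized version $f_\varepsilon(u^q)$ as in the proof of Lemma \ref{Lem1}, together with spatial cut-offs $\varphi$ that are then removed as in Lemma \ref{integdec}. Since $\rho$ is bounded and $u_0\in L^{\zeta}(\rho\,d\mu)\cap L^\infty$, the terms carrying $|\nabla\varphi|$ vanish in the limit. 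This gives
\[
y'(t)\le -c\int_M |\nabla u^{\sigma/p}|^p\,d\mu,\qquad \sigma=\zeta-1+q(p-1)=\zeta-D,
\]
because $|\nabla u^q|^{p-2}\nabla u^q\cdot\nabla u^{\zeta-1}$ equals a constant times $|\nabla u^{(\zeta-1+q(p-1))/p}|^p$. Note $\sigma>0$ as long as $\zeta>D$, which holds since $\zeta\ge1>D$. We also need $y$ to be finite and non-increasing, which follows from the same computation.

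Next apply the Sobolev inequality (\ref{Sobolevinequality}) to $f=u^{\sigma/p}$:
\[
\int_M|\nabla u^{\sigma/p}|^p\ge c\Big(\int_M u^{\sigma\kappa}\,d\mu\Big)^{1/\kappa}.
\]
We then bound $y$ from above by this Sobolev norm via H\"older's inequality with the weight $\rho$:
\[
\int u^\zeta\rho\le \|\rho\|_{L^\theta}\Big(\int u^{\zeta\theta'}\Big)^{1/\theta'} .
\]
The exponents must match. If $\zeta\theta'=\sigma\kappa$, then $y\le C\|u^{\sigma/p}\|_{p\kappa}^{p\zeta/\sigma}$, hence
\[
y'\le -c\,y^{\sigma/\zeta}=-c\,y^{1-D/\zeta}.
\]
Solving $\zeta\theta'=(\zeta-D)\kappa$ gives $1/\theta=1-\zeta/((\zeta-D)\kappa)$, which is exactly the expression for $\theta$ in (\ref{thetamax}) in the regime $\kappa\ge\zeta/(\zeta-D)$. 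When $\kappa<\zeta/(\zeta-D)$, we cannot use this matching, since $\theta'$ would be less than $1$. Instead we interpolate: write $\int u^\zeta\rho$ using boundedness of $\rho$ ($\theta=\infty$) together with the monotonicity $\|u(t)\|_{L^\infty}\le\|u_0\|_{L^\infty}$ from Lemma \ref{monl1}, adapted to the weighted equation. This lets us trade powers $u^{\zeta}\le \|u_0\|_\infty^{\zeta-\sigma\kappa}u^{\sigma\kappa}$, using $\sigma\kappa<\zeta$. We get $y\le C\int u^{\sigma\kappa}$, so $y'\le -c\,y^{1/\kappa}$ with $1/\kappa<1$.

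In either case we reach $y'\le -c\,y^{\gamma}$ with $\gamma<1$. Integrating gives
\[
y(t)^{1-\gamma}\le y(0)^{1-\gamma}-c(1-\gamma)t,
\]
so $y(T)=0$ for some finite $T$. Then $u(\cdot,T)=0$ a.e., since $\rho>0$, and monotonicity of $y$ gives $u\equiv0$ for all $t\ge T$.

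The main obstacle is rigorously justifying the energy inequality for $y$. On the analytic side, $u^{\zeta-1}$ may fail to be Lipschitz near $u=0$ when $\zeta<2$, and $\zeta=1$ (the edge case $\sigma=1-D$) needs care. We handle both through the $\varepsilon$-regularization and lower semicontinuity argument of Lemma \ref{Lem1}. We also need the cut-off removal on a non-compact $M$, for which we use boundedness of $u$ and $\rho$ and the integrability of $u_0$. A final point is checking that the Sobolev inequality applies to $u^{\sigma/p}$ globally, meaning $u^{\sigma/p}$ lies in the right space; we obtain this by applying it first to $u^{\sigma/p}\varphi$ and passing to the limit.
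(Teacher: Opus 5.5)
Your argument works for $\zeta>1$, but it has a genuine gap at $\zeta=1$. The statement explicitly covers this case: it is the second line of (\ref{thetamax}) with $\zeta=1$, i.e.\ $\kappa<1/(1-D)$. It is also precisely the case used in Examples \ref{exampleRn} and \ref{exampleHn}.

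With $\zeta=1$ your test function is $u^{\zeta-1}\equiv 1$, and the pointwise identity you rely on reads
\begin{equation*}
|\nabla u^{q}|^{p-2}\langle\nabla u^{q},\nabla u^{\zeta-1}\rangle=(\zeta-1)\,q^{p-1}\Big(\frac{p}{\sigma}\Big)^{p}|\nabla u^{\sigma/p}|^{p},
\end{equation*}
whose constant vanishes. Formally $\frac{d}{dt}\int_M u\rho=0$ is just mass balance. So the functional $\int_M u\rho$ carries no dissipation at all, and your inequality $y'\le -c\,y^{1/\kappa}$ has no content.

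The $\varepsilon$-regularization of Lemma \ref{Lem1} cannot repair this. That lemma needs $\sigma>q(p-1)=1-D$ strictly, not merely $\sigma>0$, whereas here $\sigma=1-D$. At $\lambda=1$ one has $a=(\lambda-1)/q=0$, so $f_\varepsilon\equiv 0$, and the coefficient $a-pb^{p/(p-1)}$ of the gradient term is never positive.

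The missing idea is that you need not work with the exponent $\zeta$ itself. Since $\int_M u_0^{\lambda}\rho\le\|u_0\|_{L^\infty}^{\lambda-1}\int_M u_0\rho$, you can run the energy argument with $y_\lambda=\int_M u^{\lambda}\rho$ for any $\lambda\ge\zeta$ with $\lambda>1$, setting $\sigma=\lambda-D$. The paper, via \cite{surig2024finite} and the improved Lemma \ref{Lem1}, takes $\sigma=\max\big(\zeta-D,\frac{D}{\kappa-1}\big)$ and $\lambda=\sigma+D$.

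In the $\theta=\infty$ regime this means $\lambda=\frac{D\kappa}{\kappa-1}$. Then $\sigma\kappa=\lambda$ exactly, so H\"older with $\|\rho\|_{L^\infty}$ suffices. The requirement $\sigma=\frac{D}{\kappa-1}>1-D$ of Lemma \ref{Lem1} is exactly $\kappa<\frac{1}{1-D}$. This is where that hypothesis genuinely enters when $\zeta=1$. Equivalently, any $\lambda\in\big(1,\frac{D\kappa}{\kappa-1}\big]$ combined with your $L^\infty$ interpolation also works.

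For $\zeta>1$ your proof is fine. In the first regime it coincides with the paper's choice $\sigma=\zeta-D$. In the second regime, your interpolation $u^{\zeta}\le\|u\|_{L^\infty}^{\zeta-\sigma\kappa}u^{\sigma\kappa}$ at $\lambda=\zeta$ is a valid alternative to raising $\lambda$ to $\frac{D\kappa}{\kappa-1}$.
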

	
\begin{proof}
In Theorem 1.1 in \cite{surig2024finite} the statement is proved for \begin{equation}\label{defthetaint}\theta=\frac{\kappa}{ \kappa-1-\frac{D}{\sigma}}\end{equation} where \begin{equation}\label{sigmalargeintreal}\sigma=\sigma_{old}= \max\left(pq, \zeta-D, \frac{D}{\kappa-1}\right)\end{equation} and the condition $\sigma_{old}\geq pq$ comes from a Caccioppoli type inequality in this paper. In the Caccioppoli type inequality Lemma \ref{Lem1} from the present paper, this condition is replaced by $\sigma>q(p-1)=1-D$. Hence, using the same arguments as in \cite{surig2024finite} we obtain Proposition \ref{thmfinexint} with $\theta$ as in (\ref{defthetaint}) and $\sigma$ given by \begin{equation}\label{sigmaneu}\sigma= \max\left(\zeta-D, \frac{D}{\kappa-1}\right)\end{equation} if $\sigma>1-D$. Hence, if $\zeta>1$, we clearly have $\sigma\geq \zeta-D>1-D$. In the case $\zeta=1$, it follows from (\ref{sigmaneu}) that $\sigma>1-D$ holds if and only if $\frac{D}{\kappa-1}>1-D$, which is equivalent to $\kappa< \frac{1}{1-D}$. Thus, in either case in (\ref{thetamax}) $\sigma>1-D$ which allows to apply Lemma \ref{Lem1}.
\end{proof}	

\begin{example}\label{exampleRn}
Let $M=\mathbb{R}^{n}$, $\zeta=1$ and $n>p$. Then inequality (\ref{Sobolevinequality}) holds with $\kappa=\frac{n}{n-p}$. It follows from (\ref{thetamax}) that we have $\theta=\infty$ if $\kappa< \frac{1}{1-D}$, which is in this case equivalent to \begin{equation}\label{nDp}nD>p.\end{equation} Then  (\ref{finitecondint}) holds for $\rho\equiv1$. Hence, we conclude a finite extinction time for solutions of the Leibenson equation (\ref{dtv}) in $\mathbb{R}^{n}$ assuming (\ref{nDp}). Also, there is a positive fundamental solution of (\ref{dtv}) in $\mathbb{R}^{n}$ (\textit{Barenblatt solution}) that decays as $|x|^{-p/D}$ whence it is not in $L^{1}$ if (\ref{nDp}) holds (cf. \cite{grigor2026sharp, grigor2025upper}).
\end{example}

\begin{example}\label{exampleHn}
Let $M=\mathbb{H}^{n}$, $\zeta=1$ and $n>p$. It is well-known \cite{chavel1984eigenvalues} that in $\mathbb{H}^{n}$ $$\int_{\mathbb{H}^{n}}{|f|^{p} d\mu}\leq C\int_{\mathbb{H}^{n}}{|\nabla f|^{p}d\mu}\quad \forall~f\in W^{1, p}(\mathbb{H}^{n})$$ and $$\left(\int_{\mathbb{H}^{n}}{|f|^{\frac{pn}{n-p}} d\mu}\right)^{(n-p)/n}\leq C\int_{\mathbb{H}^{n}}{|\nabla f|^{p}d\mu}\quad \forall~f\in W^{1, p}(\mathbb{H}^{n}).$$
Hence, by interpolation we obtain that (\ref{Sobolevinequality}) is satisfied for any $1<\kappa\leq \frac{n}{n-p}$. In particular, we can choose $1<\kappa<\min\left(\frac{n}{n-p}, \frac{1}{1-D}\right)$, which implies by (\ref{thetamax}) and (\ref{finitecondint}) a finite extinction time for solutions of the Leibenson equation (\ref{dtv}) in $\mathbb{H}^{n}$.
\end{example}

\section{Model manifolds with a fast volume growth}\label{appendix}

Let us construct a model manifold with a faster volume growth than (\ref{expvolint}) in Theorem \ref{expmassint} and a solution $u$ of the Trudinger equation \begin{equation}\label{trudinger}\partial _{t}u=\Delta _{p}u^{\frac{1}{p-1}}\end{equation} that does not have the conservation of mass property. 

\begin{lemma}\label{eigenlemma}
Assume that any non-negative bounded solution $u$ to (\ref{evoeq}) in $M\times [0, T)$, $T\in (0, \infty]$ with non-negative initial function $u(\cdot, 0)=u_0\in L^{1}(M)\cap L^{\infty}(M)$ has the property of \textit{mass conservation}. Fix some $\lambda>0$. Then there is no non-trivial non-negative function $v\in L^{1}(M)\cap L^{\infty}(M)$ with $v^{q}\in W^{1, p}(M)$ such that \begin{equation}\label{eigen}-\Delta_pv^{q}=\lambda v,\end{equation} weakly in $M$.
\end{lemma}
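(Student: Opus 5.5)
We argue by contradiction. Suppose there is a non-trivial non-negative $v\in L^{1}(M)\cap L^{\infty}(M)$ with $v^{q}\in W^{1,p}(M)$ and $-\Delta_p v^{q}=\lambda v$ weakly. We will build from $v$ an explicit separable solution of (\ref{evoeq}) whose mass strictly decreases in time, contradicting the assumed mass conservation.

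The ansatz is $u(x,t)=T(t)\,v(x)$. Since $\Delta_p(T^{q}v^{q})=T^{q(p-1)}\Delta_p v^{q}=-\lambda T^{q(p-1)}v$, the equation $\partial_t u=\Delta_p u^{q}$ reduces to the ODE
$$T'=-\lambda T^{q(p-1)},\qquad T(0)=1 .$$
Write $m=q(p-1)$. The solution depends on $m$:
\begin{itemize}
\item if $m=1$, then $T(t)=e^{-\lambda t}$;
\item if $m\neq 1$, then $T(t)=\bigl(1-(1-m)\lambda t\bigr)_+^{1/(1-m)}$.
\end{itemize}
When $m>1$ this is $T(t)=\bigl(1+(m-1)\lambda t\bigr)^{-1/(m-1)}$, which is positive for all $t$. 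When $m<1$, the formula vanishes at $t_*=1/((1-m)\lambda)$, and we set $T\equiv 0$ beyond $t_*$.

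In every case $T$ is $C^{1}$ on some interval $[0,T_0)$, with $0<T\le 1$ there. We take $T_0=\infty$ in the first two cases, or $T_0=t_*$ if $m<1$. This is why the statement of Lemma \ref{eigenlemma} allows solutions on $M\times[0,T)$.

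Next we verify that $u=Tv$ is a non-negative bounded weak solution on $M\times[0,T_0)$ in the sense of (\ref{defvonsoluq})--(\ref{defvonweaksolq}), with equality. The regularity requirements hold for the following reasons.
\begin{itemize}
\item Since $v\in L^{1}\cap L^{\infty}$, we have $v\in L^{q+1}$, so $u\in C([0,T_0);L^{q+1}(M))$.
\item Since $u^{q}=T^{q}v^{q}$ with $v^{q}\in W^{1,p}(M)$, we have $u^{q}\in L^{p}_{loc}(W^{1,p})$.
\end{itemize}
For the weak identity, take an admissible test function $\psi$. We use $\bigl[\int u\psi\bigr]_{t_1}^{t_2}-\int\!\!\int u\,\partial_t\psi=\int\!\!\int T'v\psi$, which is the product rule in time. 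Applying the weak eigen-equation at each fixed $t$ with the spatial test function $\psi(\cdot,t)\in W^{1,p}_0$ gives
$$\int|\nabla u^{q}|^{p-2}\langle\nabla u^{q},\nabla\psi\rangle=T^{m}\lambda\int v\psi .$$
The ODE then makes the two contributions cancel. The time-integration by parts is justified for $\psi\in W^{1,1+1/q}_{loc}(L^{1+1/q})$ because $v\in L^{q+1}$ and $T$ is $C^{1}$ on $[t_1,t_2]$.

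The main technical point is making sure the class of test functions matches the eigen-equation. The weak formulation of $-\Delta_pv^{q}=\lambda v$ is stated for test functions in $W^{1,p}_0$, or $C_0^{\infty}$ extended by density. Extending it to $W^{1,p}_0(M)\cap L^{1+1/q}(M)$ uses $|\nabla v^{q}|^{p-1}\in L^{p/(p-1)}$ and $v\in L^{q+1}$. With the equation available on this class, the rest is a direct computation.

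Finally, by hypothesis the solution $u$ conserves mass: $\int_M u(\cdot,t)=\int_M v$ for $t\in(0,T_0)$. But $\int_M u(\cdot,t)=T(t)\int_M v$. For any $t\in(0,T_0)$ we have $T(t)<1$, and $\int_M v>0$ because $v$ is non-trivial. Hence $\int_M u(\cdot,t)<\int_M v$, which is a contradiction. Therefore no such $v$ exists.
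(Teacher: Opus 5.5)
Your proposal is correct and follows the paper's proof. Both use the separable ansatz $u=a(t)v$ with the ODE $a'=-\lambda a^{q(p-1)}$, $a(0)=1$, and then the identity $\int_M u(\cdot,t)=a(t)\int_M v$ with $a(t)<1$, which forces $\int_M v=0$; your explicit check that $u$ is a weak solution, which the paper omits, is a welcome addition, and extending the eigen-equation to $W^{1,p}_0$ test functions is easier than you make it, since $v\in L^1\cap L^\infty\subset L^{p/(p-1)}$.
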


\begin{proof}
Assume that there exists a non-negative non-zero function $v$ satisfying (\ref{eigen}). Let	$u(x, t)=a(t)v(x)$. Then by (\ref{eigen}) $$\Delta_pu^{q}=a(t)^{q(p-1)}\Delta_pv^{q}=-\lambda a(t)^{q(p-1)}v.$$ Since $\partial_tu=a^{\prime}(t)v$, $u$ is a solution to (\ref{evoeq}) if $$a^{\prime}(t)=-\lambda a(t)^{q(p-1)}, \quad a(0)=1.$$ Solving this we get $$a(t)=\left\{ 
\begin{array}{ll}
	e^{-\lambda t}, & \text{if }q(p-1)=1, \\ 
	(1+[q(p-1)-1]\lambda t)^{-\frac{1}{q(p-1)-1}}, & \text{if }q(p-1)>1, \\ 
	(1-[1-q(p-1)]\lambda t)_+^{\frac{1}{1-q(p-1)}}, & \text{if }q(p-1)<1.
\end{array}%
\right.$$ It follows from the mass conservation property that $$\int_Mv=\int_M u(\cdot, 0)=\int_M u_0=a(t)\int_M v,$$ which yields for all $t>0$, $$\int_Mv=0,$$ and thus, $v\equiv 0$ a.e., which finishes the proof.
\end{proof}

Let $M$ be a model manifold, that is $M=(0, +\infty)\times \mathbb{S}^{n-1}$ as topological spaces and $M$ is equipped with the Riemannian metric $ds^{2}$ given by \begin{equation*}ds^{2}=dr^{2}+\psi^{2}(r)d\theta^{2},\end{equation*} where $\psi(r)$ is a smooth positive function on $(0, +\infty)$ and $d\theta^{2}$ is the standard Riemannian metric on $\mathbb{S}^{n-1}$. We define $S(r)=\psi^{n-1}(r)$, which is called the profile of the model manifold.

Assume that, for $r\geq r_0$, $$S(r)= r^{-\beta}\exp\left((p-1)r^{\alpha}+\frac{p-1}{\alpha^{p-1}}\int_{r_0}^{r}s^{-\beta}ds\right),$$ where $$\alpha>\frac{p}{p-1}\quad \textnormal{and}\quad\beta=(\alpha-1)(p-1)>1.$$

By Lemma \ref{eigenlemma}, in order to show that $M$ does not admit the conservation of mass property, it is enough to show the existence of a function $v\in L^{1}(M)\cap L^{\infty}(M)$ satisfying (\ref{eigen}).

Let $v(x)=v(r)=w^{p-1}(r)$, that is, function $v$ depends only on the polar radius $r$. Assume also that $\partial_{r}w\leq 0$, then $$\Delta_{p}w=-\frac{1}{S}\partial _{r}\left( S\left( -\partial
_{r}w\right) ^{p-1}\right)$$ so that (\ref{eigen}) becomes \begin{equation}\label{evoeqmodel}\frac{1}{S}\partial _{r}\left( S\left( -\partial_{r}w\right) ^{p-1}\right)=\lambda w^{p-1}.\end{equation}
Let $w(r)=e^{-r^{\alpha}}$ and note that $$\int_Mw^{p-1}=\int_0^{\infty}w^{p-1}(r)S(r)dr\simeq \int_{r_0}^{\infty}r^{-\beta}<\infty$$ and $$\int_{r_0}^{\infty}|w^{\prime}(r)|^pS(r)dr\simeq \int_{r_0}^{\infty}r^{\alpha-1}e^{-r^{\alpha}}dr<\infty.$$ Let us show that $w$ satisfies (\ref{evoeqmodel}). We have $(-\partial_rw(r))^{p-1}=\alpha^{p-1} r^{\beta}e^{-(p-1)r^{\alpha}}$ and thus, \begin{align*}\partial _{r}\left( S\left( -\partial_{r}w\right) ^{p-1}\right)&=\partial _{r}\left(\alpha^{p-1}\exp\left(\frac{p-1}{\alpha^{p-1}}\int_{r_0}^{r}s^{-\beta}ds\right)\right)\\&=(p-1)r^{-\beta}\exp\left(\frac{p-1}{\alpha^{p-1}}\int_{r_0}^{r}s^{-\beta}ds\right)\\&=(p-1)S(r)w(r)^{p-1},\end{align*} which proves (\ref{evoeqmodel}) with $\lambda=p-1$.

Since $S(r)\simeq r^{-\beta}\exp((p-1)r^{\alpha})$ and thus $V(r)\simeq \exp((p-1)r^{\alpha})$ we obtain indeed a manifold with a faster volume growth than (\ref{expvolint}) in Theorem \ref{expmassint}. In the linear case $p=2$, it follows from Theorem 8.24 in \cite{grigoryan2009heat} that this model manifold is stochastically incomplete.
	
	
	\bibliographystyle{abbrv}
	\bibliography{librarycacc}
	
	\emph{Universit\"{a}t Bielefeld, Fakult\"{a}t f\"{u}r Mathematik, Postfach
		100131, D-33501, Bielefeld, Germany}
	
	\texttt{philipp.suerig@uni-bielefeld.de}
\end{document}